\setlist{topsep=0.5em, itemsep=0em}
\newtheorem{theorem}{Theorem}[section]
\newtheorem{proposition}[theorem]{Proposition}
\newtheorem{definition}[theorem]{Definition}
\newtheorem{corollary}[theorem]{Corollary}
\newtheorem{lemma}[theorem]{Lemma}
\newcounter{assume}
\newtheorem{assumption}[assume]{Assumption}
\theoremstyle{definition}
\theoremstyle{remark}
\newcommand{\with}{\, ; \,}
\newcommand{\given}{\, | \,}
\newcommand{\suchthat}{\, : \,}
\newcommand{\rbb}{\mathbb{R}}
\newcommand{\ep}{\epsilon}
\renewcommand{\d}{\mathrm{d}}
\newcommand{\be}{\begin{equation}}
\newcommand{\ee}{\end{equation}}
\newcommand{\bea}{\begin{eqnarray}}
\newcommand{\eea}{\end{eqnarray}}
\DeclareMathSymbol{\umu}{\mathalpha}{operators}{0}
\def\E{\mathbb{E}}
\def\R{\mathbb{R}}
\def\LOPER{\mathcal{L}}
\def\SCK{K}
\def\CSEC{\sigma}
\def\VSP{{\Omega_v}}
\def\rbb{\mathbb{R}}
\newcommand{\Prob}{\mathbb{P}}
\newcommand{\domain}{\Omega}
\newcommand{\domainv}{\mathcal{V}}
\newcommand{\source}{\mathcal{S}}
\newcommand{\gen}{\mathcal{L}}
\newcommand{\Ind}{\mathbb{1}}
\DeclareMathOperator*{\argmin}{arg\,min}
\def\MAP{F^\epsilon}
\def\mydate{\number\day\ {\ifcase\month \or January\or February\or
              March\or April\or May\or June\or July\or August\or
              September\or October\or November\or December\fi}
\number\year}
\numberwithin{equation}{section}
\title{Source identification via path-wise gradient estimation.} %
\author[R. B. Lehoucq]{Richard B. Lehoucq}
\address{Sandia National Laboratories, Albuquerque, NM}
\email{rblehou@sandia.gov}
\author[S. A. McKinley]{Scott A. McKinley}
\address{Department of Mathematics, Tulane University, New Orleans, LA 70118, USA.}
\email{scott.mckinley@tulane.edu}
\author[P. Plech\'a\v{c}]{Petr Plech\'a\v{c}}
\address{Department of Mathematical Sciences, University of Delaware, Newark, DE 19716, USA}
\email{plechac@udel.edu}
\date{\today}
\begin{document}

\begin{abstract}
In the context of PDE-constrained optimization theory, source identification problems traditionally entail particles emerging from an unknown source distribution inside a domain, moving according to a prescribed stochastic process, e.g.~Brownian motion, and then exiting through the boundary of a compact domain. Given information about the flux of particles through the boundary of the domain, the challenge is to infer as much as possible about the source. 

In the PDE setting, it is usually assumed that the flux can be observed without error and at all points on the boundary. Here we consider a different, more statistical presentation of the problem, in which the data has the form of discrete counts of particles arriving at a set of disjoint detectors whose union is a strict subset of the boundary. In keeping with the primacy of the stochastic processes in the generation of the model, we present a stochastic gradient descent algorithm in which exit rates and parameter sensitivities are computed by simulations of particle paths. We present examples for both It\^o diffusion and piecewise-deterministic Markov processes, noting that the form of the sensitivities depends only on the parameterization of the source distribution and is universal among a large class of Markov processes.
\end{abstract}

\maketitle

\section{Introduction}
Source identification for elliptic PDEs is a classical inverse problem that has been considered in the context of gravimetry \citep{isakov1990inverse,isakov2006inverse}, EEG readings \citep{elbadia2000inverse,baratchart2004recovery} and more recently in several other more general settings (see \citet{liu2017inverse,liu2020reconstruction} for example). The theme of these problems is that particles emerge from a source in the interior of a compact domain and evolve according to a prescribed stochastic process until they exit the system, either through the boundary of the domain or through degradation. The particle system is assumed to be in a steady state, and the question is whether the location, size, and/or magnitude of the source distribution can be inferred from data taken exclusively at the boundary.

The problem is typically postulated in terms of a deterministic boundary value problem (BVP), formulated as follows. 
Suppose that $\mathcal{A}$  is an elliptic  
partial differential operator and $\domain \subset \mathbb{R}^d$ is an open domain with sufficiently regular boundary $\partial \domain$. Then the \emph{forward problem} consists of begin given source function $s:\domain \to \mathbb{R}$ and a boundary value function $f:\partial \domain \to \mathbb{R}$, and then solving
\begin{equation} \label{eq:bvp}
\begin{aligned}
\mathcal{A} u &= -s, & x \in \domain; \\
u &= f, &x \in \partial \domain.
\end{aligned}
\end{equation}
to find the boundary flux $g \colon \partial \domain \to \rbb$. Assuming the coefficient of the Laplacian term in $\mathcal{A}$ is constant and normalized to one, the boundary flux has the form
\begin{equation}
    g(x) \coloneqq \nabla u(x) \cdot n(x)
\end{equation}
for all $x \in \partial \domain$, where $n(x)$ is the outward normal vector at $x$.

The \emph{inverse problem} consists of inferring the source function $s$ from a given data pair $(f,g)$.
Such a problem is not always well-posed \citep{isakov1990inverse,liu2017inverse}. 
A typical example of unidentifiability occurs when two proposed sources have the same center and the same ``volume'', then the boundary data can be identical \citep{elbadia2000inverse,el2011inverse}. To deal with ill-posedness, investigators commonly restrict the source function to be a member of a reduced class of functions: e.g., point sources and dipoles \citep{elbadia2000inverse}, 
circles \citep{shigeta2003numerical}, and star-shapes \citep{alves2017inverse,liu2017inverse} are recent examples. 
When explicit solutions for $u$ are available, this yields a specific set of inverse methods that are, however, difficult to generalize. For this reason, a host of computational methods have been introduced, each tuned to the specifics of the challenges that the authors have in mind. A recent comprehensive review of numerical approaches to the inverse source problem for the Helmholtz equation can be found in the work by \citet{liu2020reconstruction}.

\subsection{Inverse problems for stochastic fountains: particle counts and binned data}

In this work, we take a different point of view on the source identification problems, which is rooted in a stochastic processes perspective. In particular, if we think of the operator $\mathcal{A}$ as the adjoint of the generator for a continuous-time Markov process, then we can interpret the inverse problem with Dirichlet data 
$f = 0$ as attempting to learn about the location of a source distribution given information about particle exits. 

This shift in perspective also encourages a change in assumptions about what we consider to be the ``data'' used for the source identification. 
For example, in PDE inverse problems it is commonly assumed that the pair of functions $(f,g)$ is specified at every point on the boundary. This means the observer must be able to measure the boundary flux pointwise, which is considered reasonable in the context of gravimetry. However, from the particle perspective, a more natural observable is a collection of particle \emph{counts} as they exit through a set of distinct regions $\{D_j\}_{j=1}^J$ that we call \emph{detectors}. From a probability theory perspective, this amounts to observing outcomes of a continuous random variable (the exit distribution) while restricted to binned data counts.

We can formalize this in the simplest case as follows. We model the particle source as a Poisson process of births on the real line that has rate $\lambda > 0$ and initial locations that are independently and identically distributed (i.i.d.) according to a probability distribution $\phi(x \with \theta)$, where $\theta \in \rbb^d$ is an unknown 
location parameter.  Suppose that the birth times are enumerated 
$\{T_1, T_2, \ldots,\}$ and the particle 
trajectories are labeled 
$\{X_n(t)\}_{t \geq T_n}$ for $n \in \mathbb{N}$. Upon arrival the particles undergo Markov process movement 
that is governed by a common generator $\gen$ until they exit the domain at hitting times denoted 
$\{\tau_n\}_{n \in \mathbb{N}}$. Then, for any given observation window $[0,T]$, the data we consider in this work have the form
$$
N_j(T) = \sum_{n=1}^\infty \Ind_{[0,T]}\big(\tau_n\big) \Ind_{D_j}\big(X_n(\tau_n)\big), \quad j \in \{1, \ldots, J\}\,,
$$
which counts the number of particles that have exited through detector $D_j$ in the interval $[0,T]$.
\begin{figure}[t]
    \centering
    \includegraphics[width = 0.9\textwidth]{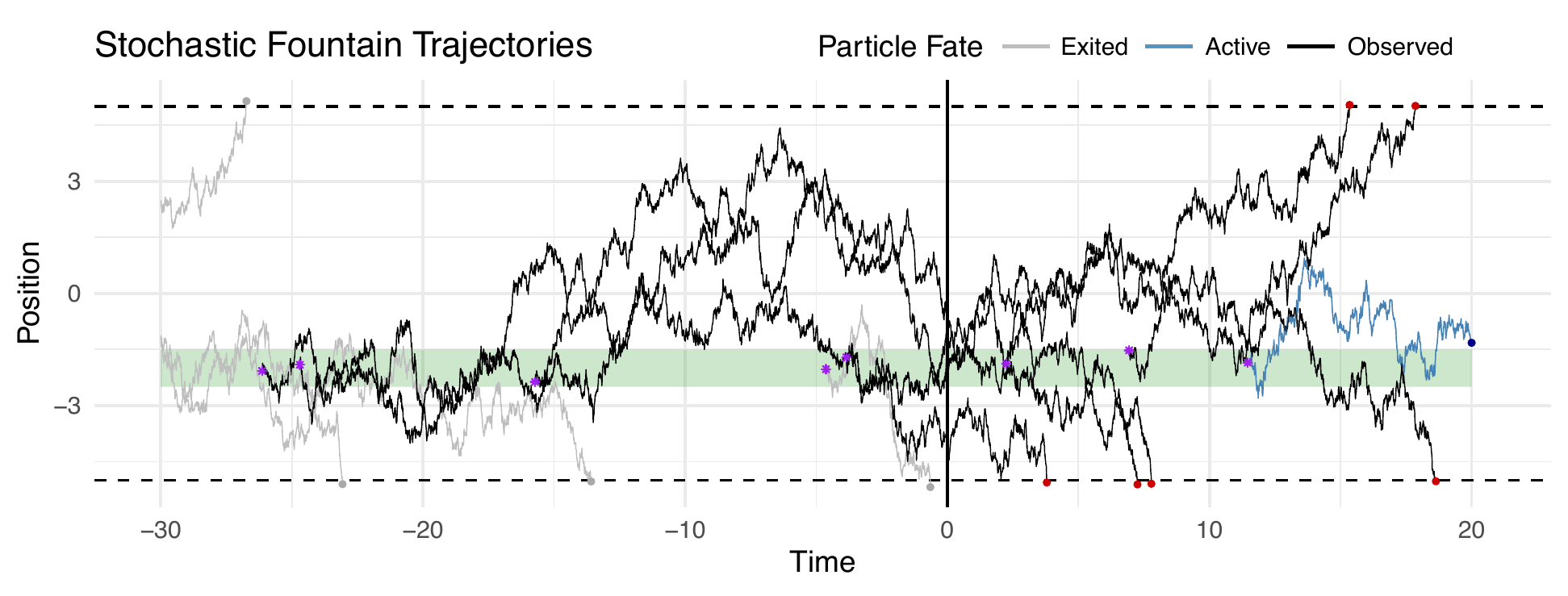} \\
    \includegraphics[width = 0.9\textwidth]{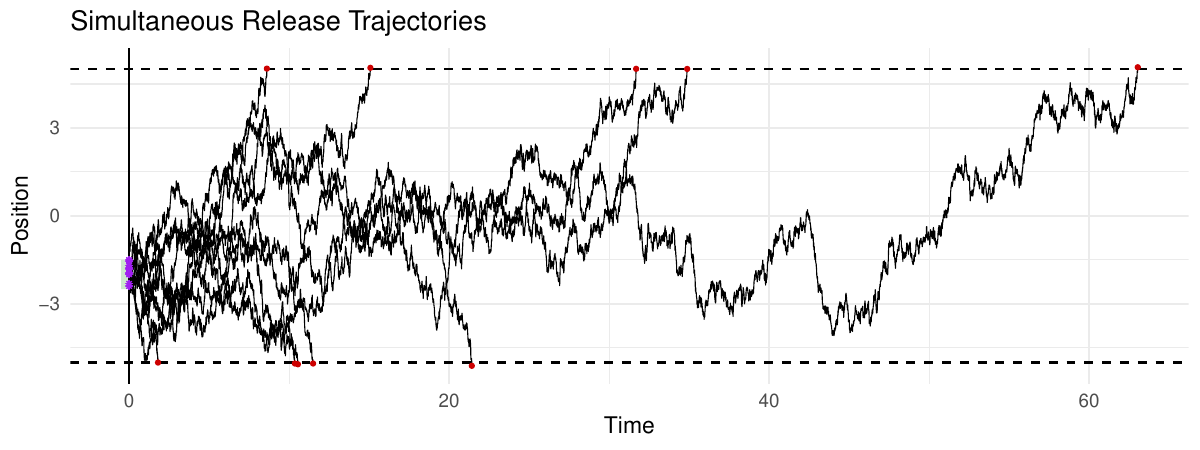}\\
    \caption{A \emph{stochastic fountain} is a particle ensemble model for experimentalists observing a system of dynamic particles ``in steady-state.'' Particles have birth times extending in the past ($t < 0$) and are observed over a prescribed window of time ($t \in [0,T]$). However, numerical simulations and stochastic process theorems are more commonly articulated in terms of all particles having the same initial time $t=0$, i.~e.~they are \emph{simultaneously released}, and simulated until exit. See \Cref{sec:fountain} for rigorous definitions.}
    \label{fig:fountain}
\end{figure}

A visualization of this \emph{stochastic fountain} system is provided in \Cref{fig:fountain}. In the top panel, we see particles emerging from the source distribution, whose support is indicated by the green band. Because the system is in steady-state, there are trajectories that emerge and fully resolve before time $t=0$ (gray paths). The paths that exit during the observation window (exit points marked in red and paths are colored black) may have been born before $t = 0$ or during the observation window $[0,T]$ (here $T = 20$). The fact that four particles in the example have exited through $x = -5$ while two exited through $x = 5$ provides a clue that the source might be off center. Clearly, more exit observations will be necessary to establish such a claim more rigorously. At the end of the observation window, some particles may remain active (blue paths) and inference aspects based on active particles at one snapshot in time has been explored by \citet{miles2024incorporating-a71}. 

We provide a contrast in the lower panel of \Cref{fig:fountain}, depicting what we refer to as a \emph{simultaneous release} system. While observations are naturally modeled by stochastic fountains, numerical simulations that follow paths all the way to their exit time are better articulated in terms of particles being simultaneously released at time $t=0$. 

\subsection{Connection between exit observations and BVP flux}

For the dynamics we have described, the particles are non-interacting, so there is a significant simplification in capturing the probability distribution of experimental outcomes. When the particle birth process is a Poisson process, the exits of particles through each of the detectors are thinned Poisson processes. In \Cref{sec:theory} we show that the numbers of particles observed to exit through the distinct detectors in the time interval $[0,T]$ are independent with respective distributions
\begin{displaymath}
    N_j \sim \text{Pois}\big(\lambda p_j (\theta) T\big)
\end{displaymath}
where $p_j(\theta)$ is probability that a given particle exits through the detector $D_j$ when its initial location is distributed according to $\phi(\cdot \with \theta)$. For reasons that will become clear later, we will commonly write the exit probabilities in terms of the mathematical expectation of a particle's location at its exit time $\tau$:
\begin{equation} \label{eq:expected-exit}
    p_j(\theta) \coloneqq \Prob^\theta\big(X(\tau)\in D_j\big)= \E^\theta\big[\Ind_{D_j}(X_\tau)\big]
\end{equation}
where $\Ind_A : \rbb^d \to \{0,1\}$ is the indicator function on a set $A \subset \rbb^d$.

To make the discussion concrete, suppose that particle locations evolve according to an It\^{o} diffusion with generator $\gen$. Define $\gen^*$ to be the adjoint of $\gen$ and let $u$ be the solution of the BVP
\begin{equation} \label{eq:bvp-u}
\begin{aligned}
    \gen^* u(x \with \theta) &= -\lambda \phi(x \with \theta), & x \in \domain; \\
    u(x \with \theta) &= 0, & x \in \partial \domain.
\end{aligned}
\end{equation}
From a PDE perspective, one would say the flux through the detectors is given by
\begin{equation} \label{eq:g_j}
g_j(\theta) \coloneqq \int_{D_j} \eta \nabla u(x \with \theta) \cdot n(x) \, \d S(x)
\end{equation}
where $D_j \subset \partial \domain$ and $\eta$ is the (constant) diffusion coefficient. One of our tasks in this work will be to confirm this flux interpretation matches with the stochastic process interpretation that particles are exiting the domain at the birth rate $\lambda$ times the appropriate exit probability, i.e.,
\begin{center}
    \emph{PDE flux through $D_j$ = Markov ensemble exit rate through $D_j$}.
\end{center}
This relationship exists in the stochastic processes ``folklore'' but to our knowledge has not been addressed in the context of statistical inference. As an example, a relationship of this type appears for It\^{o} diffusions in the text by \citet[Thm.~6.2.2]{schuss2009theory}, where the emphasis is placed on a single-point source with a unit birth rate. In this case, the boundary flux is exactly the exit distribution of any given particle \citep[Thm.~6.3.1]{schuss2009theory}. By contrast, in the work we present here, we wish to allow for more general arrival rates and to explore the exit-rate/BVP-flux in a more general Markov process context. To be specific, we will study stochastic fountains generated by (1) It\^{o} diffusions with additive noise; and (2), with an appropriately modified version of \Cref{eq:bvp-u} and \Cref{eq:g_j}, a class of piecewise-linear Markov processes, inspired both by neutron transport with scattering \citep{horton2023neutron} and intracellular cargo transport carried out by molecular motors \citep{cook2024considering}.

In terms of the respective inverse problems, the presence of discrete detectors turns the classical PDE inverse problem into one with binned data: i.~e.~we are given a vector of boundary fluxes 
$\hat{\mathbf{g}} = (\hat{g}_1, \ldots, \hat{g}_J)$ and use this to estimate the location parameter $\theta$ assuming that other parameters are known ($\lambda$ and $\eta$ in particular). One natural way to construct an estimator would be to define a convex loss function $\ell \colon \mathbb{R} \to \mathbb{R}_+$ and define
\begin{equation} \label{eq:optimization-pde}
\left\{
\begin{aligned}
    &\hat \theta_g \coloneqq \argmin_{\theta} \sum_j \ell\big(g_j(\theta) - \hat{g}_j\big), \\
    & \qquad \qquad\text{subject to } \eqref{eq:bvp-u} \text{ and } \eqref{eq:g_j}.
\end{aligned}
\right.
\end{equation}

From the stochastic processes perspective, the data takes the form of a vector of exit counts $\mathbf{\hat n} = (\hat n_1, \hat n_2, \ldots \hat n_J)$. Assuming that the particle arrival rate $\lambda$ and the duration of the observation window $T$ are known, our task is to build an estimator from the inferred exit probabilities $\mathbf{\hat p} = (\hat p_1, \hat p_2, \ldots, \hat p_J)$, which are constructed by normalizing the counts with respect to the expected number of total particle exits: 
\begin{equation} \label{eq:p-j-hat}
\hat p_j \coloneqq \frac{\hat n_j}{\lambda T}\,.
\end{equation} 
Analogous to \eqref{eq:optimization-pde}, we define our estimator for \emph{binned particle count data} as a constrained stochastic process optimization problem:
\begin{equation} \label{eq:optimization-process}
\left\{
\begin{aligned}
    &\hat{\theta} \coloneqq \argmin_{\theta} \sum_{j = 1}^J \ell \Big(\E^{\theta} \big[\Ind_{D_j}(X(\tau)) \big] - \hat{p}_j\Big) \\
    & \qquad \qquad\text{subject to } X(t) \text{ having generator } \gen, \\
    & \qquad \qquad\text{with initial location distribution } \phi(\cdot \with \theta).    
\end{aligned}
\right.
\end{equation}

Given these formulations, numerous search strategies can be postulated. In this work, we employ a gradient descent method, which presents a technical challenge that provided a major impetus for this research. 
In particular, taking the gradient of this objective function requires computing the sensitivity of the expected value with respect to the parameter of interest:
\begin{equation} \label{eq:gradient-general}
\nabla_\theta \left(\sum_{j = 1}^J \ell\big(\E^{\theta} \big[\Ind_{D_j}(X(\tau))\big] - \hat{p}_j)\right) = \sum_{j = 1}^J \ell'\big(\E^{\theta} \big[\Ind_{D_j}(X(\tau))\big]  - \hat{p}_j\big) \nabla_\theta \E^{\theta} \big[\Ind_{D_j}(X(\tau))\big].
\end{equation}
The essential observation of this work is that the form of the gradient term on the right-hand side does not depend on the particular choice of the Markov process, and both expectations on the right-hand side can be computed from the same ensemble of path-wise simulations.

\subsection{Approach to gradient estimation}

There is an extensive literature investigating sensitivities of expected values with respect to parameters, both for general random variables \citep{glynn1987likelilood} and for stochastic processes \citep{glynn1989likelihood}. The choice of method is usually determined by the overlap (or lack thereof) of the supports and relative magnitudes of the probability distributions used to generate the expected values. 

For example, suppose we would like to find the optimal choice from a candidate family of probability distributions, indexed by $\theta$, 
that have continuous densities. 
The support of each distribution is defined to be the set of all $x$ such that, for any open ball $B$ containing $x$, we have $\int_B f(x) \d x > 0$. 
When all distributions in the candidate family have the same support, we can use 
\emph{likelihood-ratio methods} that compute the Radon-Nikod\'ym derivative between candidate probability measures to assess the relative likelihood of experimental outcomes. 

In recent works, the parameter dependence of 
steady-state expectations computed from the stationary distribution of a Markov chain were studied in \citet{glynn2019likelihood}, for continuous time Markov chains (CTMC)
in \cite{wang_steady-state_2019}, and for diffusion processes in \citet{plechac_convergence_2021,plechac_martingale_2023}.
Sensitivity with respect to the initial conditions 
(the financial ``greek'', $\Delta$) was computed using likelihood ratio methods in \citet{broadie1996estimating}. 

However, if the initial condition of the generating stochastic process is just a single point, the family of candidate distribution functions is a set of Dirac $\delta$-functions 
respectively located at $\theta$. In this case, all candidate distributions are mutually singular, 
and the likelihood ratio methods are not well-defined. In this case, ideas from Malliavin calculus have proved useful, see \citep{fournie1999applications,chen2007malliavin,plyasunov2007efficient,warren2012malliavin,warren2013malliavin}.

Here, our model problem is similar to a common assumption made in the PDE inverse theory literature: namely, the source is supported on a ball centered at $\theta$ and therefore has a compact support. Since the Malliavin derivative technique is based on numerical integration of whole paths, 
it can become computationally intensive. 
Meanwhile, likelihood ratio approaches cannot be immediately implemented because any two candidate distributions under a perturbation of $\theta$
contain mutually singular subsets. However, our method can be seen as an extension of the likelihood ratio method, using an insight from the development of the classical Reynolds transport formula. We note that a similar ``shape derivative'' technique was implemented by \citet{liu2017inverse} and \citet{charkaoui2021efficient} for source identification techniques in the context of the Helmholtz equation with a variety of source support shapes.

\subsection{Pathwise Monte Carlo (MC) approach to estimation}

As has been widely noted in the literature,  differentiation of an expected value with respect to a parameter yields a Radon-Nikodym derivative formula that can be represented by an expectation of a random variable with respect to the base distribution. Roughly speaking, this means that for the gradient appearing in \Cref{eq:gradient-general}, there exists a random variable $Q$ such that 
\begin{equation}
    \nabla_\theta \E^\theta\big[\Ind_{D_j}(X(\tau))\big] \approx \E^\theta\big[\Ind_{D_j}(X(\tau)) Q \big].
\end{equation}
In \Cref{sec:fountain:sensitivity} we will find assumptions so that this relation is exact, and also find the corrective terms in occasions when it is not. Given such a formula though, if we can compute $Q$ from the same information that generates knowledge of the event $\{X(\tau) \in D_j\}$, then all terms in the gradient calculation \Cref{eq:gradient-general} can be estimated by MC methods and the MC path samples can be reused.

In fact, the use of MC methods is well-established in the study of radiation transport and can present several advantages over deterministic methods in specific geometries \citep{pascucci20042d}, when the medium in anisotropic \citep{pinte2009benchmark}, or when the data of interest involves particle counts or statistics concerning individual particle exit locations or exit directions \citep{code1995polarization}. See also \citet{brunner2002forms}, \citet{juvela2005efficient} and \citet{kuiper2013reliability} for interesting discussions concerning comparisons of computational methods. Building in a capability to simultaneously compute expected path outcomes and parameter sensitivities adds significant value to the methods.

Therefore, while the problem we present can be solved by deterministic methods, the framework generalizes to situations where MC methods have already been demonstrated to have intrinsic advantages. 

\subsection{Overview of work}

In \Cref{sec:fountain}, we define two classes of ``baseline'' Markov processes that can be readily simulated and for which source identification applications are natural to consider. We define what we call a steady state \emph{stochastic fountain} ensemble in which particles are born according to a Poisson arrival process and evolve independently until they exit a given compact domain. As in \Cref{fig:fountain}, we draw a contrast to simultaneous release systems that are implicitly generated in numerical experiments. 

In \Cref{sec:theory}, we provide a theoretical framework that explicitly ties the inverse problem articulated for stochastic fountains to the classical PDE source identification problem, as articulated by \Cref{eq:bvp-u,eq:g_j}, with appropriate modifications for piecewise-linear Markov processes (PLMPs). In \Cref{sec:fountain:sensitivity} we report one of our main theoretical results, implementing the formula to compute sensitivity of exit probabilities, and emphasize that the form of the result does not depend on the underlying baseline Markov process. 

Using the stochastic gradient descent algorithm articulated in \Cref{defn:descent}, in \Cref{sec:results}, we demonstrate the efficacy of the algorithm in a simple setting for both Brownian motion and PLMPs. We numerically explore two notions of statistical consistency. In \Cref{sec:results:M} we show convergence of the MC estimates for the gradient of the exit probability with respect to the source location parameter. In \Cref{sec:results:N}, we demonstrate a numerical version of consistency by examining the estimator for $\theta$ as more data points become available and near-perfect computation of the exit probabilities is achieved.

\section{Stochastic model development and notation}
\label{sec:fountain}

In this work, we consider two types of Markov processes: It\^{o} diffusions with constant diffusivity, and piecewise-linear Markov processes (PLMP) with constant-velocity segments. We define \emph{baseline} versions that are then replicated in two types of ensemble systems. The first, which we call a steady-state \emph{stochastic fountain} system, is the type of system we assume generates the data measured by our fictional observer. The second, a \emph{simultaneous release} system, is useful for the computational approximation of exit probabilities and derivatives with respect to parameters for the algorithm we report at the end of this section.

Our results apply to Markov processes that always include a location process $\{X(t)\}_{t \geq T_0}$, where $T_0 \in \rbb$ is some initial time. However, for PLMPs the location process is not Markov in and of itself. The velocity process must also be included for the system to satisfy the Markov property. Because applications and generalizations of this work may include even more state variables (for example, in the intracellular transport literature, the number and types of active molecular motors attached to the cargo are essential variables) we will use the notation $\{\chi(t)\}_{t \geq T_0}$ to refer to the full collection of state variables necessary for the process to be Markov.  In each case we will define the operator $\gen$ to be the generator of a continuous-time, time-homogeneous Markov process $\{\chi(t)\}_{t \geq T_0}$ in the sense that for all smooth and bounded functions $f$, we have 
\begin{displaymath}
    \gen f(x) = \lim_{t \downarrow T_0} \frac{1}{t} \big(\E^x[f(\chi_t)] - x\big)\,.
\end{displaymath}

\begin{assumption}[Baseline It\^{o} Diffusion]
\label{a:baseline-diffusion}
When $X(t)$ is an It\^{o} diffusion, we assume that it satisfies a stochastic differential equation (SDE) of the form
    \begin{equation}
    \begin{aligned}
        \d X(t) &= b(X(t)) \, \d t + \sqrt{2 \eta} \d W(t)\,, \;\quad t > T_0\,; \\
        X(T_0) &= \xi \sim \phi\,.
    \end{aligned}    
    \end{equation}
    where $b$ is a twice continuously differentiable vector field. In this case, the generator $\gen$ and its adjoint $\gen^*$ are
    \begin{equation}
    \begin{aligned}
        \gen f(x) &= b(x) \cdot \nabla f(x) + \eta \Delta f(x) \\
        \gen^* u(x) &= - \nabla \cdot \big(b(x) u(x)\big) + \eta \Delta u(x).
    \end{aligned}
    \end{equation}    
\end{assumption}

\begin{assumption}[Baseline Piecewise-Linear Markov Process (PLMP)]
\label{a:baseline-scattering}
A PLMP $\{\chi(t)\}_{t \geq T_0}$ can be expressed in terms of its joint location and velocity processes $\{X(t),V(t)\}_{t \geq T_0}$. We assume that the velocity process is a piecewise-constant function expressed in terms of successive velocities $v_i$ that are chosen according to a jump distribution $K(v,v',x)$ (informally speaking, the probability density for jumping from the velocity $v$ to the velocity $v'$ given that the particle is located at position $x$. If $\sigma_s(x,v)$ is a state-dependent velocity transition rate,
then the generator and its adjoint can be written as follows:
\begin{equation}\label{eq:PDMPgen}
\begin{aligned}
\LOPER f(x,v) &= v \cdot \nabla f(x,v) - \CSEC_s(x,v) f(x,v) + \CSEC_s(x,v)\int_{\VSP} \SCK(v,v',x)f(x,v')\,dv'\,,\\
\LOPER^* u(x,v) &= -v \cdot \nabla u(x,v) - \CSEC_s(x,v) u(x,v) + \CSEC_s(x,v)\int_{\VSP} \SCK(v',v,x)u(x,v')\,dv'\,.
\end{aligned}
\end{equation}
\end{assumption}

The dynamics of PLMPs are somewhat more transparent when expressed in terms of path properties. Suppose the initial position and velocity are chosen according to a joint distribution $(X_0,V_0) \sim \phi(x,v)$. The distribution of the first jump time $J_1$ is given by the survival probability
\begin{displaymath}
    \Prob\big(J_1 > t \given X_0 = x, V_0 = v\big) = e^{-\int_{T_0}^t \sigma_s(x + v t',v) \d t'}.
\end{displaymath}
Given the jump position, the new velocity has the distribution
\begin{displaymath}
    V_1 \big|_{(X(J_1), V_0) = (x,v)} \sim K(v, \cdot, x).
\end{displaymath}
Subsequent jump times and velocities are defined similarly, and the position process takes the form
\begin{equation}
    X(t) = X(J_{i}) + t V_i, \qquad t \in [J_i, J_{i+1})
\end{equation}
for $i \in \{0, 1, 2, \ldots\}$. 

An important instance of PLMPs arises in neutron transport, where the adjoint operator $\gen^*$ is the Boltzmann operator. See the recent text by \citet{horton2023neutron} for further information, and the paper by \citet{smith2025stochastic} for an SDE formulation of neutron transport. 

So far in this section, these stochastic processes have been defined in all $\rbb^d$. For the remainder of this work, we will be concerned with particles passing through the boundary of a compact domain. We next express our assumptions about the domain, describe admissible source distributions, and establish exit time notation for the analysis that follows.

\begin{assumption}[Domain] \label{a:domain}
    Let $\domain \subset \mathbb{R}^d$ be an open domain with smooth boundary $\partial \domain$. We will use $\overline \domain$ to denote the closure of the domain. Given a baseline Markov process $\{\chi(t)\}_{t \geq T_0}$ that has location $\{X(t)\}_{t \geq T_0}$, we define the exit time
    \begin{equation}
        \tau \coloneqq \inf \{t > T_0 \suchthat X(t) \notin \domain\}
    \end{equation}
    and assume that 
    \begin{equation}
        \sup_{x \in \domain} \E^x[\tau] < \infty.
    \end{equation}
\end{assumption}

\begin{definition}[Admissible source distributions]
\label{defn:source}
A function $\phi \colon \rbb^d \to \rbb_+$ is called an admissible source distribution if it is a probability distribution that is compactly supported on a ball of radius $\beta > 0$ centered at a location $\theta \in \rbb^d$ with radial symmetry about $\theta$. We will denote the vector of all source distribution parameters $\vartheta = (\theta, \beta)$. 

We say that a collection of source distributions form a \emph{location-and-scale} family if there is a continuously differentiable function $\Psi: [0,1] \to \rbb_+$ such that 
\begin{equation}\label{eq:spherical_density}
    \phi(x \with \vartheta) = \frac{1}{C \beta^d} \Psi\left(\frac{|x - \theta|}{\beta}\right)
\end{equation}
for all $\vartheta$ such that $B_\beta(\theta) \subset \domain$. Without loss of generality, we write  
\begin{equation} \label{eq:defn-base-distr}
\Psi(r) = \begin{cases}
    e^{-\psi(r)}, & r \in [0,1); \\
    \lim_{r' \uparrow 1} e^{-\psi(r')} & r = 1 \\
    0 & r > 1.
\end{cases}
\end{equation}
The constant $C$ depends only on $\psi$ and $d$ and satisfies
\begin{displaymath}
    C \coloneqq \frac{2 \pi^{d/2}}{\Gamma(d/2)} \int_0^1 e^{-\psi(r)} r^{d-1} \d r
\end{displaymath}
so that $\phi$ is a probability distribution.

We define 
\begin{equation} \label{eq:source-support}
    \partial\source_{\vartheta} = \{x \in \rbb^d \suchthat |x - \theta| = \beta\}
\end{equation} 
to be the boundary of the support of the source distribution.
\end{definition}

The two forms of the base distribution that we will keep in mind throughout this work have the following forms:
\begin{equation}
\begin{aligned}
    \text{Uniform distribution: } \psi(r) &= \Ind_{[0,1]}(r); \\
    \text{Bump function: } \psi(r) &= \frac{1}{1-r^2}.
\end{aligned} \hspace*{0.75 in}
\end{equation}

When the baseline Markov porcess is a PLMP, we will denote the space of all velocities $\domainv$ and extend the notation for $\phi$ to include velocities as an argument. Typically, we will assume that particles move with a fixed speed $c$ and the initial distribution of velocities will be uniform over a circle of radius $c$. 

\begin{definition}[Boundary Detectors and Exit Notation]
\label{defn:boundary-exit}
For a given domain $\Omega \in \rbb^d$, let $\{D_j\}_{j = 1}^J$ be a sequence of disjoint connected subsets of the boundary $\partial \Omega$. These represent detectors where we can observe particles exiting the domain. 

Suppose that a baseline Markov process $\{\chi(t)\}_{t \geq 0}$ has initial time $T_0 = 0$ and let $\{X(t)\}_{t \geq 0}$ be its location process. Let the domain $\Omega$, exit time $\tau$ and admissible source distribution $\phi$, satisfy the foregoing assumptions in this section. We define the survival function and the associated density of the exit distribution as follows:
\begin{equation} \label{eq:survival}
    S(t) = \Prob^\phi(\tau > t), \,\, \text{and } \,\, \rho(t) = - S'(t),
\end{equation}
where $\Prob^\phi$ indicates that the probability is conditioned on the initial distribution $\chi(0) \sim \phi$. For each $j$ we define the functions
\begin{equation} \label{eq:survival-restricted}
    S_j(t) = \Prob^\phi\big(\tau > t, X(\tau) \in D_j\big) \,\, \text{and } \, \rho_j(t) = - S_j'(t).
\end{equation}
That is to say, $S_j(t)$ is the proportion of particles that have not exited the domain as of time $t$ and will eventually exit through $D_j$. Because this is a restricted event rather than a conditioned event, $\rho_j(t)$ is not a probability density (because it does not integrate to one). For contrast, note that $\rho(t)$ \emph{is} a probability density. However, the integral of $\rho_j(t)$ is an essential quantity: it is the probability a particle exits through detector $j$, 
\begin{equation} \label{eq:surival-probability-detector}
\begin{aligned}
    p_j(\phi) \coloneqq \int_0^\infty \!\! \rho_j(t) \d t &= \lim_{t \to \infty} \int_0^t -S'_j(t') \d t' \\
    &= \lim_{t \to \infty} (S_j(0) - S_j(t)) = \Prob^\phi\big(X(\tau) \in D_j\big) 
\end{aligned}
\end{equation}
\end{definition}

We are now ready to rigorously define the \emph{stochastic fountain} system and the exit process that produces the data our theoretical observer will measure. The observer starts her detectors at time zero and counts the number of particles that exit through each detector until time $T > 0$. The \emph{fountain exit processes} refer to the arrivals at each after $t=0$ of the detectors as a function of time.

\begin{definition}[Steady-state stochastic fountain exit processes] \label{defn:fountain}
Let a generator $\gen$ be given for a baseline Markov process and let $\domain$ be an open domain with boundary detectors $\{D_j\}_{j=1}^J$. Let $\phi$ be an admissible source distribution. 

Let $A$ be a homogeneous Poisson process on the real line with intensity measure $\lambda$ times Lebesgue measure on $\rbb$. This represents the \emph{birth (or arrival) process} of particles in the fountain system. For a given sample of the point process $A(\omega)$ (here $\omega$ is an element of the underlying probability space), let $\{T_n(\omega)\}_{n=1}^\infty$ be an enumeration of its points.

For each $n \in \mathbb{N}$, let $\{X_n(t)\}_{t \geq T_n}$ be location of a Markov process driven by generator $\gen$ with initial condition $\chi_n(T_n) \stackrel{iid}{\sim} \phi$. For each $n$, let $\tau_n$ be the exit time for $X_n(t)$ from $\Omega$.

Then the exit process through detector $j$ is defined to be 
\begin{equation} \label{eq:fountain-exit}
    N_j(T) = \sum_{n=1}^\infty \Ind_{[0,T]}(\tau_n) \Ind_{D_j}\big(X_n(\tau_n)\big).
\end{equation}
\end{definition}

Our goal is to provide a method for using the fountain exit processes to infer the source location (and size, when possible). However, as mentioned in the introduction, we would like to take a pathwise approach to estimating exit probabilities. Since stochastic fountain systems are in steady state, numerical simulations of these systems entail producing significant burn-in periods. In practice, this means that a substantial number of paths are discarded when particles are born well before the observation window $[0,T]$ and exit the domain before time zero. (Recall the gray paths in the top panel of \Cref{fig:fountain}.)

In order to conduct a more efficient estimation of the exit probabilities, we introduce another particle system in which all trajectories are used directly in empirical averages. Here, a fixed number of particles $M$ are simulated starting at the initial time $t=0$ and simulated until their exit time, regardless of how long it takes. Through the theory presented in \Cref{sec:theory}, the empirical exit probabilities for the \emph{simultaneous release} system can be used to compute the exit probabilities and gradients that we need.

\begin{definition}[Simultaneous release exit probabilities]
\label{defn:simul-release}
Let a generator $\gen$ be given for a baseline Markov process and let $\domain$ be an open domain with boundary detectors $\{D_j\}_{j=1}^J$. Let $\phi$ be an admissible source distribution for particles within $\domain$. 

Let $M \in \mathbb{N}$ be a given number of particles and let $\{X_m(t)\}_{t \geq 0}$ be Markov processes driven by $\gen$ with initial condition $X_m(0) = \xi_m \stackrel{iid}{\sim} \phi$. For each $m$, let $\tau_m$ be the exit time for $X_m(t)$ from $\Omega$.

Then the empirical exit probability through detector $j$ is defined to be 
\begin{equation} \label{eq:fountain-exit-2}
    p_{j,M}(\phi) \coloneqq \frac{1}{M} \sum_{m=1}^M \Ind_{D_j}\big(X_m(\tau_m)\big).
\end{equation}
\end{definition}

\section{Analytical results: boundary flux and exit rates}
\label{sec:theory}
In this section we build our justification for using path simulations in a simultaneous release system (\Cref{defn:simul-release}) to estimate exit rates of the stochastic fountain system (\Cref{defn:fountain}) and ultimately the BVP flux presented in the introduction (\Cref{eq:bvp-u,eq:g_j}). The discussion that follows is restricted to It\^{o} diffusions with additive noise and PLMPs, but the formal argument should apply to any Markov process, as long as the adjoint operator $\gen^*$, integration by parts, and the boundary flux can be properly articulated.

\subsection{Results for It\^{o} diffusions}
\label{sec:theory:ito}

\begin{proposition}[Exit probabilities] 
\label{thm:dynkin}
Let $\domain$ be a domain satisfying the assumptions of \Cref{defn:source} with detectors specified by \Cref{defn:boundary-exit}. Let $\{\chi(t)\}_{t \geq 0}$ be a baseline Markov process satisfying \Cref{a:baseline-diffusion} with location process $\{X(t)\}_{t \geq 0}$ and admissible source distribution $\phi$ (\Cref{defn:source}). Then, for each $j \in 1, \ldots J$ the exit probabilities $p_j(\phi)$ satisfy
\begin{equation} \label{eq:exit-prob}
    p_j(\phi) \coloneqq \Prob^\phi(X(\tau) \in D_j) = \int_\Omega w_j(x) \phi(x) \d x,
\end{equation}
where $w_j(x)$ satisfies
\begin{equation} \label{eq:w-definition}
    \begin{aligned}
        \gen w_j(x) &= 0, & x \in \domain; \\
        w_j(x) &= \Ind_{D_j}(x), & x \in \partial \domain.
    \end{aligned}
\end{equation}
\end{proposition}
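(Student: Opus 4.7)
The plan is to establish the representation by first constructing a probabilistic candidate for $w_j$ and then verifying it solves the boundary value problem, after which \eqref{eq:exit-prob} reduces to conditioning on the initial location.

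\textbf{Step 1: Define the candidate and handle the boundary data.} For each $x \in \overline{\Omega}$, set $w_j(x) \coloneqq \Prob^x\bigl(X(\tau) \in D_j\bigr) = \E^x\bigl[\Ind_{D_j}(X(\tau))\bigr]$. For $x \in \partial\Omega$, the definition of $\tau$ in \Cref{a:domain} gives $\tau = 0$ $\Prob^x$-almost surely, hence $X(\tau) = x$ and $w_j(x) = \Ind_{D_j}(x)$, which is the boundary datum in \eqref{eq:w-definition}. This requires only that the boundary is smooth enough that the diffusion exits immediately, which is standard under \Cref{a:baseline-diffusion}--\Cref{a:domain}.

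\textbf{Step 2: Show $\gen w_j = 0$ in $\Omega$.} The natural route is to invoke the strong Markov property at a small stopping time $\tau_h = \inf\{t > 0 : X(t) \notin B_h(x)\}$ for a ball $B_h(x) \Subset \Omega$. Conditioning on $\mathcal{F}_{\tau_h}$ and using $\tau_h < \tau$ yields the mean-value identity $w_j(x) = \E^x\bigl[w_j(X(\tau_h))\bigr]$, which expresses $w_j$ as $\gen$-harmonic in the averaged sense. Combined with standard interior regularity for uniformly elliptic operators with smooth coefficients (the operator in \Cref{a:baseline-diffusion} has $\eta \Delta$ as its principal part with $\eta > 0$), this upgrades $w_j$ to $C^2(\Omega)$ and yields $\gen w_j(x) = 0$ pointwise in $\Omega$. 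Equivalently, Dynkin's formula applied to the bounded stopping time $\tau \wedge t$ shows that $w_j(X(t \wedge \tau))$ is a $\Prob^x$-martingale, and then $t \to \infty$ combined with \Cref{a:domain} (finite expected exit time implies $\tau < \infty$ a.s., so dominated convergence applies because $w_j \in [0,1]$) gives $w_j(x) = \E^x[w_j(X(\tau))] = \Prob^x(X(\tau) \in D_j)$, consistent with the definition. Uniqueness for the BVP \eqref{eq:w-definition} with $L^\infty$ boundary data then pins down $w_j$ as the unique bounded weak solution.

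\textbf{Step 3: Unconditioning to obtain \eqref{eq:exit-prob}.} With $X(0) = \xi \sim \phi$ under $\Prob^\phi$, the tower property and the strong Markov property at $t=0$ give
\begin{equation*}
p_j(\phi) = \Prob^\phi\bigl(X(\tau) \in D_j\bigr) = \E^\phi\!\left[\Prob^{\xi}\bigl(X(\tau) \in D_j\bigr)\right] = \int_\Omega w_j(x)\,\phi(x)\,\d x,
\end{equation*}
where the last equality uses that $\phi$ is compactly supported in $\Omega$ by \Cref{defn:source}, so $\xi \in \Omega$ a.s.\ and the integration is against the admissible density.

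\textbf{Expected obstacle.} The two mildly delicate points are (i) justifying that the candidate $w_j$ is regular enough to apply Itô/Dynkin and to interpret $\gen w_j = 0$ pointwise rather than only in the martingale/viscosity sense, and (ii) ensuring the boundary trace is attained in the classical sense given that $\Ind_{D_j}$ is only piecewise constant, with jumps at $\partial D_j \subset \partial\Omega$. For the first, interior Schauder estimates suffice once the mean-value property is in hand. For the second, the natural resolution is to read \eqref{eq:w-definition} as holding pointwise at continuity points of $\Ind_{D_j}$ on $\partial\Omega$, which is all that is needed for the integrated identity \eqref{eq:exit-prob}; no regularity of $w_j$ at $\partial D_j$ enters the probabilistic argument.
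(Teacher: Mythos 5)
Your argument is correct, and it runs in the opposite logical direction from the paper's. The paper takes a solution $w_j$ of \eqref{eq:w-definition} as given and applies Dynkin's formula on $[0,\tau]$ to conclude $w_j(x) = \Prob^x(X(\tau)\in D_j)$, then integrates against $\phi$; your Step 2 contains exactly this argument as its ``equivalently'' clause, so the key tool is the same. What you do differently is to \emph{construct} the candidate probabilistically and then verify it solves the BVP: boundary data via immediate exit from $\partial\domain$, interior harmonicity via the strong Markov property at a small exit time plus interior elliptic regularity, and finally the unconditioning step over $\phi$ (which is where the compact support of the admissible source inside $\domain$ is used, exactly as in the paper's last line). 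This buys you something the paper's proof simply assumes, namely existence of a function satisfying \eqref{eq:w-definition} in the first place, and it also makes you confront the genuine delicacy that the paper glosses over: the boundary datum $\Ind_{D_j}$ is discontinuous on $\partial D_j$, so a classical solution attaining the trace everywhere does not exist, and Dynkin's formula as applied in the paper implicitly needs $w_j$ smooth up to the boundary. Your resolution --- interpret the boundary condition at continuity points of $\Ind_{D_j}$ and note that the integrated identity \eqref{eq:exit-prob} is insensitive to the exceptional set $\partial D_j$ (which is $\Prob^x$-null for the exit location under \Cref{a:baseline-diffusion}) --- is the right fix and is slightly more careful than the published argument. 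The only point to tighten is the regularity upgrade in Step 2: the mean-value identity alone gives $\gen$-harmonicity in an averaged sense, and the passage to $C^2(\domain)$ should be justified either by hypoellipticity/Schauder theory for the uniformly elliptic operator with smooth coefficients (as you indicate) or by identifying $w_j$ with the known smooth solution of the Dirichlet problem in the interior; as written this is a one-line citation away from complete, and the remainder of the proof is sound.
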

\begin{proof}
    This follows directly from Dynkin's formula. Suppose that $w_j(x)$ satisfies \Cref{eq:w-definition}. Then 
    \begin{equation} \label{eq:w-j-dynkin}
        \E^x[w_j(X(\tau))] = w_j(x) + \E^{x}\left[ \int_0^\tau \gen w_j(X(t)) \d t\right].
    \end{equation}
    By definition, $X(t) \in \domain$ for all $t < \tau$. Therefore, the integrand on the right-hand side is identically zero (since $\gen w_j(x) = 0$ in the interior of the domain). Meanwhile, $X(\tau) \in \partial \domain$, so we have on the left-hand side of \Cref{eq:w-j-dynkin} that $\E^x[w_j(X(\tau))] = \E^x[\Ind_{D_j}(X(\tau))] = \Prob^x(X(\tau) \in D_j)$. 
    
    We conclude that $w_j(x) = \Prob^x(X(\tau) \in D_j)$ and \Cref{eq:exit-prob} follows by integrating the initial condition over the source distribution.
\end{proof}

\begin{theorem}[Stochastic fountain exit rates]
\label{thm:fountain-exit-rates}
    Under the conditions given by \Cref{defn:fountain} with an It\^{o} diffusion baseline process satisfying \Cref{a:baseline-diffusion} and an admissible source distribution $\phi$ satisfying \Cref{defn:source}. Then the exit process $\{N_j(T)\}_{t \geq 0}$ defined by \Cref{eq:fountain-exit} is a Poisson process with rate $\lambda p_j(\phi)$ where $p_j(\phi)$ is the exit probability given in \Cref{thm:dynkin}.
\end{theorem}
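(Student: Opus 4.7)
My plan is to derive the theorem from three classical properties of Poisson point processes: independent marking, displacement, and thinning. The birth process $A$ is a homogeneous Poisson process on $\mathbb{R}$ with rate $\lambda$, so by the marking theorem, the collection $\{(T_n, \xi_n, W_n)\}$—where $\xi_n = \chi_n(T_n) \stackrel{iid}{\sim} \phi$ and $W_n$ is the independent driving Brownian motion governing the $n$-th trajectory—is a Poisson point process on $\mathbb{R} \times \domain \times \mathcal{C}$ (with $\mathcal{C}$ the Wiener path space) of intensity $\lambda\,dt \otimes \phi(x)\,dx \otimes \Prob_W$. The data $(\tau_n, X_n(\tau_n))$ is a measurable function of $(T_n, \xi_n, W_n)$, so it suffices to track how the Poisson structure transforms under this map.

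First, I would reduce to the exit-time process by applying the displacement theorem. Let $D_n \coloneqq \tau_n - T_n$ denote the $n$-th dwell time. Since $\xi_n \stackrel{iid}{\sim} \phi$ and the noises $W_n$ are independent, the displacements $\{D_n\}$ are iid, each distributed as $\tau$ under $\Prob^\phi$; \Cref{a:domain} together with compact support of $\phi$ in $\domain$ gives $\E^\phi[\tau] < \infty$, so in particular $D_n < \infty$ a.s. The displacement theorem for homogeneous Poisson processes on $\mathbb{R}$ then yields that $\{\tau_n\} = \{T_n + D_n\}$ is again a homogeneous Poisson process on $\mathbb{R}$ with rate $\lambda$.

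Next, I would mark each $\tau_n$ by the exit location $X_n(\tau_n) \in \partial\domain$. Conditional on the exit-time process, these marks are iid with common law equal to that of $X(\tau)$ under $\Prob^\phi$, and by \Cref{thm:dynkin} this law assigns mass $p_j(\phi) = \int_\domain w_j(x)\phi(x)\,dx$ to each detector $D_j$. Because the detectors are disjoint, independent thinning yields that $N_j$ is a Poisson process with rate $\lambda p_j(\phi)$ and, simultaneously, that the processes $N_1, \ldots, N_J$ are mutually independent.

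The main obstacle I anticipate is the rigorous justification of the displacement step for a two-sided arrival process on $\mathbb{R}$ when the delays are only almost surely (not uniformly) bounded. The key technical point is that for any window $[0,T]$, only finitely many particles—most of them born at times $T_n < 0$—can contribute to $N_j(T)$, so that the sum in \eqref{eq:fountain-exit} is well-defined. By Campbell's theorem applied to the Poisson point process $\{(T_n, D_n)\}$ with intensity $\lambda\, dt \otimes \Prob^\phi(\tau \in ds)$, the expected number of pairs with $T_n + D_n \in [0,T]$ equals $\lambda T$, which is finite precisely because $\E^\phi[\tau]<\infty$. Once this integrability is established, the displacement and thinning theorems apply in their standard form and the remaining steps are routine.
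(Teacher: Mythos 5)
Your argument is correct, but it takes a genuinely different route from the paper. You work entirely within Poisson point process theory: marking the birth process by $(\xi_n,W_n)$, pushing forward under $(T_n,\xi_n,W_n)\mapsto(\tau_n,X_n(\tau_n))$ (displacement/mapping), and then restricting to the disjoint detectors (thinning), which delivers in one stroke both the full Poisson-process structure of each $N_j$ and the mutual independence of $N_1,\dots,N_J$. The paper instead truncates the fountain to a finite start time $t_0<0$, computes $\E^\phi[N_j(T\with t_0)]$ via Wald's identity and the law of total probability over the uniform birth time, sends $t_0\to-\infty$ (this is where $\E^\phi[\tau]<\infty$ is genuinely used, to control $\int_0^\infty S_j(y)\,\d y$), and finally differentiates the limiting mean in $T$ to identify the rate $\lambda p_j(\phi)$, citing the non-interaction of particles for the thinned-Poisson character. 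Your route is more structural and actually proves more cleanly the distributional claim (stationary independent increments and cross-detector independence), while the paper's is more elementary and makes explicit the role of the restricted survival functions $S_j$. Two points in your write-up deserve sharpening. First, the claim that, conditional on the exit-time process, the exit locations are iid with the law of $X(\tau)$ under $\Prob^\phi$ is not automatic, since exit time and exit location of a single particle are dependent; it holds because the pushforward intensity factorizes as $\lambda\,\d t\otimes\mathrm{law}(X(\tau))$, a Fubini computation in which the time-translation invariance of the homogeneous birth intensity washes out that dependence (this is exactly your Campbell calculation, so state it as the justification rather than appealing to conditional iid marks directly). Second, the Campbell integral giving expected count $\lambda T$ over $[0,T]$ is finite as soon as $\tau<\infty$ a.s.; it does not require $\E^\phi[\tau]<\infty$, so your phrase ``precisely because $\E^\phi[\tau]<\infty$'' overstates what that hypothesis is doing in your argument (in the paper's proof, by contrast, the finite mean is what licenses the $t_0\to-\infty$ limit).
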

\begin{proof}
    Our approach is to first establish that for all $t > 0$, $N_j(t)$ is a Poisson random variable. We then calculate its mean to establish the rate of the Poisson process.  As a preliminary step, we establish these facts for a system with an overall start time $t_0 < 0$ and the stated conclusions follow from taking the system start time $t_0$ back to $-\infty$. 
    
    To this end, let $\{\widetilde X_n(t)\}_{t \geq 0}$ ($n \in \mathbb{N}$) be a sequence of iid instances of particle location processes. These constitute the \emph{simultaneous release} ensemble described in \Cref{defn:simul-release} and displayed in the bottom panel of \Cref{fig:fountain}. Let $\{\tilde{\tau}_n\}$ denote their exit times from $\domain$. The number of particles in a system with start time $t_0 < 0$ and observation window $[0,T]$ is a random variable $N \sim \text{Pois}\big((T - t_0)\lambda\big)$. By standard Poisson process theory, we can assume that the individual particle birth times $\{T_n\}_{n = 1}^{N} \stackrel{iid}{\sim} \text{Unif}(t_0,T)$. For each $n$, define $X_n(t) := \widetilde{X}_n(t - T_n)$ for all $t \geq T_n$. Because the particles do not interact, their exits are independent, and the number that exit through each detector in the interval $[0,T]$is a thinned Poisson random variable. 

    Let $N_j(T \with t_0)$ denote the number of particles in a fountain system that exit through detector $j$ in the interval $[0,T]$ when the system start time is $t_0 < 0$. By Wald's equation, the identical distributions of the stochastic fountain particles, and the fact that the expectation of an indicator function is a probability of the indicated event, we have
    \begin{equation} 
        \begin{aligned}
            \E^\phi[N_j(t \with t_0)] &= \E^\phi\left[\sum_{n = 1}^N \Ind_{[0,T]}(\tau_n) \Ind_{D_j}(X\big(\tau_n)\big) \right]\\
            &= \E[N] \,\, \Prob^{\phi}\!\Big(\tau_1 \in [0,T], X_1(\tau_1) \in D_j\Big).
        \end{aligned}
    \end{equation}
    After observing that $E(N) = (T - t_0) \lambda$, we employ the law of total probability -- integrating over all possible fountain particle start times $T_1 \in [t_0,T]$:
    \begin{equation} \label{eq:fountain-intermediate}
    \begin{aligned}
        \E^\phi[N_j(t)] 
        &= \lambda (T - t_0) \,  \int_{t_0}^T \Prob^{\phi}\!\Big(\tau_1 \in [0, T], X_1(\tau_1) \in D_j \given T_1 = t \Big) \pi_{T_1}(t) \d t  \\
        &= \lambda \,  \int_{t_0}^T \Prob^{\phi}\!\Big(\tau \in [0, T], X_1(\tau_1) \in D_j \given T_1 = t \Big) \d t.
    \end{aligned}
    \end{equation}
    where $\pi_{T_1}(t) = 1/(T - t_0)$ is the (uniform) probability density of the particle birth time.
    
    Recalling the relationship $\widetilde{X}_1(t) = X_1(t - T_1)$, we have the following observations about the exit times in the simultaneous release time frame: If $T_1 < 0$, then the event $\{\tau_1 \in [0,T]\}$ equals the event $\{\tilde{\tau} \in [0-T_1, T-T_1]\}$. If $T_1 \geq 0$, then $\{\tau_1 \in [0,T]\} = \{\tau_1 \in [T_1,T]\} = \{\tilde{\tau}_1 \in [T_1,T]\}$. We can therefore proceed from \Cref{eq:fountain-intermediate} as follows:
    \begin{equation} \label{eq:fountain-almost-there}
    \begin{aligned}
        \E^\phi[N_j(T \with t_0)] 
        &= \lambda \, \Big(\int_{t_0}^T \Prob^{\phi}\!\Big(\widetilde{\tau}_1 \in [\max(0,0-t), T-t], \widetilde{X}_1(\widetilde{\tau}_1) \in D_j\Big) \d t  \\
        &= \lambda \int_{t_0}^{T} \Big(S_j\big(\max(0,-t)\big) - S_j(T - t)\Big) \d t.
    \end{aligned}
    \end{equation}
    To attain the expected number of exits through detector when the fountain is in steady state, we need to pull the system start time back to negative infinity. This is permitted because, by hypothesis, $\E^\phi(\widetilde{\tau}_1) < \infty$, and so 
    \begin{displaymath}
    \int_0^\infty S_j(y) \d y = \E^\phi\big[\widetilde{\tau}_1 \, \Ind_{D_j}\big(\widetilde{X}_1(\widetilde{\tau}_1)\big)\big] \leq \E^\phi(\tau) < \infty    
    \end{displaymath}
    It follows that
    \begin{displaymath}
        \E^\phi[N_j(T)] = \lambda \int_{-\infty}^T \Big(S_j\big(\max(0,-t)\big) - S_j(T - t)\Big) \d t.
    \end{displaymath}
    It remains to determine the exit rate, which can be attained by differentiating the above with respect to $T$. Indeed 
    \begin{equation}
    \begin{aligned}
        \frac{\d}{\d t} E^\phi[N_j(T)] &= \lambda \Big(S_j(\max(0,-T) - S_j(T) - \int_{-\infty}^T S_j'(T-t) \d t\Big) \\
        &= \lambda \int_{-\infty}^T \rho_j(T-t) \d t = \lambda p_j(\phi),
    \end{aligned}
    \end{equation}
    where we have invoked \Cref{eq:surival-probability-detector} from \Cref{defn:boundary-exit}.
\end{proof}

The previous results relate the exit rates of the steady-state fountain process to the exit probabilities captured by the simultaneous release system. It remains to relate these quantities to the boundary flux in the PDE inverse problems introduced at the beginning of this work. That result follows basically from integration by parts, relating the exit probabilities of the simultaneous release problem to the boundary flux of the BVP \Cref{eq:bvp-u,eq:g_j}.

\begin{theorem}[Relationship to BVP boundary flux]
    Define $u$ to be the solution to the BVP \eqref{eq:bvp-u}:
    \begin{displaymath}
    \begin{aligned}
        \gen^* u(x) &= -\lambda \phi(x), & x \in \domain; \\
        u(x) &= 0, & x \in \partial \domain.
    \end{aligned}
    \end{displaymath}
    where $\gen^*$ is the adjoint operator for a baseline It\^{o} diffusion process (\Cref{a:baseline-diffusion}). Then the boundary exit rates are related to the steady state PDE flux through the equation
    \begin{equation} \label{eq:diffusion-exit-flux}
        \lambda p_j(\phi) = \int_{D_j} \eta \nabla u(x) \cdot n(x) \, \d S(x) 
    \end{equation}
    where $n(x)$ is the outward pointing normal vector at the location $x$.
\end{theorem}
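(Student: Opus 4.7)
The plan is to combine the representation of the exit probability from \Cref{thm:dynkin} with the BVP \eqref{eq:bvp-u} via integration by parts, using the adjoint pairing between $\gen$ and $\gen^*$. First, I would invoke \Cref{thm:dynkin} to write
\begin{equation*}
\lambda p_j(\phi) = \lambda \int_\Omega w_j(x)\, \phi(x)\, \d x,
\end{equation*}
where $w_j$ solves $\gen w_j = 0$ in $\Omega$ with boundary data $w_j = \Ind_{D_j}$ on $\partial \Omega$. Then I would substitute the BVP \eqref{eq:bvp-u} into the integrand in the form $\lambda \phi = -\gen^* u$, obtaining
\begin{equation*}
\lambda p_j(\phi) = -\int_\Omega w_j(x)\, \gen^* u(x)\, \d x.
\end{equation*}

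Next, I would unpack the adjoint and integrate by parts. Writing $\gen^* u = -\nabla\cdot(bu) + \eta \Delta u$, the drift term contributes $\int_\Omega w_j \nabla\cdot(bu)\,\d x$, which after Gauss–Green yields $-\int_\Omega u\,(b\cdot \nabla w_j)\,\d x$ because the Dirichlet condition $u|_{\partial\Omega}=0$ kills the surface term $\int_{\partial\Omega} w_j u (b\cdot n)\,\d S$. The Laplacian term contributes $-\eta \int_\Omega w_j \Delta u\,\d x$; applying Green's second identity and again using $u|_{\partial\Omega}=0$ gives
\begin{equation*}
-\eta\int_\Omega w_j \Delta u\,\d x = -\eta\int_\Omega u\,\Delta w_j\,\d x - \eta \int_{\partial\Omega} w_j\,(\nabla u\cdot n)\,\d S.
\end{equation*}
Summing, the two interior integrals recombine into $-\int_\Omega u\,\gen w_j\,\d x$, which vanishes since $\gen w_j\equiv 0$ on $\Omega$. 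What remains is the boundary integral, and using $w_j|_{\partial\Omega}=\Ind_{D_j}$ collapses it onto the detector,
\begin{equation*}
\lambda p_j(\phi) = -\eta \int_{\partial \Omega} w_j\,(\nabla u \cdot n)\,\d S = -\eta \int_{D_j} \nabla u \cdot n \, \d S,
\end{equation*}
which matches \eqref{eq:diffusion-exit-flux} up to the sign convention for the normal $n$ (with outward $n$, the classical Fickian exit flux across $D_j$ is $-\eta \nabla u\cdot n$, which is what the identity above recovers).

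The main obstacles are not conceptual but technical: one must confirm that $u$ and $w_j$ possess enough regularity up to the boundary (e.g., $u,w_j \in C^2(\Omega)\cap C^1(\overline{\Omega})$) for the integrations by parts to be classical, which is standard under \Cref{a:domain} (smooth $\partial\Omega$) and the smoothness assumed on $b$ and $\phi$; if $\phi$ is only continuous (as admitted in \Cref{defn:source} through the cutoff in $\Psi$), one may need to interpret $u$ as a weak solution and recover the identity by density. The second subtlety is the sign convention on the outward normal: the derivation naturally produces the physically correct Fickian outflux $-\eta\nabla u\cdot n$, and the stated equality \eqref{eq:diffusion-exit-flux} should be read with this convention in mind.
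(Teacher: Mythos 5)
Your proof is correct and follows essentially the same route as the paper's: invoke \Cref{thm:dynkin} to write $\lambda p_j(\phi)=\lambda\langle w_j,\phi\rangle_\domain$, substitute $\lambda\phi=-\gen^*u$, integrate by parts (Green's second identity for the $\eta\Delta$ part, the divergence theorem for the drift part), and then use $\gen w_j=0$ in $\domain$, $u=0$ and $w_j=\Ind_{D_j}$ on $\partial\domain$ to collapse everything onto the detector. Your sign bookkeeping, which lands on $\lambda p_j(\phi)=-\eta\int_{D_j}\nabla u\cdot n\,\d S$, is in fact the self-consistent outcome of this computation (the paper's displayed derivation contains an internal sign slip before reaching \eqref{eq:diffusion-exit-flux}), so your closing remark about the Fickian convention for the outward normal is a fair observation rather than a gap in your argument.
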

\begin{proof}
    From \Cref{a:baseline-diffusion}, $\gen^* u(x) =  - \nabla \cdot \big(b(x) u(x)\big)$ where $b(x)$ is a vector-valued function. We therefore have the product rule 
    \begin{displaymath} \nabla \cdot \big(w(x) b(x) u(x) \big) = (\nabla \cdot w(x)) b(x) u(x) + w(x) \nabla \cdot \big(b(x) u(x)\big).
    \end{displaymath}    
    Using this identity in conjunction with Green's second identity and the divergence theorem, we have
    \begin{displaymath}
    \begin{aligned}
        \langle w_j, \gen^* u \rangle_\domain &= \langle w_j , \eta \Delta u \rangle_\domain - \langle w_j , \nabla \cdot (a u) \rangle_\domain \\ 
        &= \langle \eta \Delta w_j , u \rangle_\domain - \eta \int_{\partial \domain} w_j(x) \big(\nabla u(x) \cdot n(x)\big) - \big(\nabla w_j(x) \cdot n(x)\big) u(x) \d S(x) \\
        & \qquad - \int_\domain \nabla \cdot \big(b(x) w_j(x) u(x)\big) \d x + \langle b \cdot \nabla w_j, u \rangle_\domain \\
        &= \langle \gen w_j, u \rangle + \int_{\partial \domain} \eta w_j(x) \big(\nabla u(x) \cdot n(x)\big) - \eta \big(\nabla w_j(x) \cdot n(x)\big) u(x) \\
        &\qquad \qquad \qquad \qquad \quad - \, w_j(x) u(x) (b(x) \cdot n(x)) \d S(x)
    \end{aligned}
    \end{displaymath}
    where $\langle f , g \rangle_\domain \coloneqq \int_\Omega f(x) g(x) \d x$. Noting that $\gen w_j = 0$ in the interior of the domain and that $u = 0$ and $w_j = \Ind_{D_j}$ on the boundary, we conclude that 
    \begin{displaymath}
        \langle w_j, \gen^* u\rangle = - \int_{D_j} \eta \nabla u(x) \cdot n(x) \d S(x)
    \end{displaymath}
    Now, using \Cref{thm:dynkin}, we have
    \begin{displaymath}
    \lambda p_j(\phi) = \langle w_j, \lambda \phi \rangle_\domain 
    = \int_{D_j} \eta \nabla u(x) \cdot n(x) \d S(x)
    \end{displaymath}
    which is what we intended to prove.
\end{proof}

\subsection{Results for Piecewise-Linear Markov Processes}
\label{sec:theory:plmp}

For PLMPs the associated PDEs are hyperbolic, so boundary conditions are only defined where characteristics are flowing into the domain. We follow the notation used by \citet{horton2023neutron}. Let $\domain$ satisfy \Cref{a:domain} and let $\domainv$ denote the velocity domain. Then we define
\begin{equation}
    \partial (\domain \times \domainv)^+ \coloneqq \{(x,v) \in \domain \times \domainv \suchthat x \in \partial \Omega \text{ and } v \cdot n(x) > 0\}
\end{equation}
with $\partial \Omega^-$ defined analogously. 

\begin{proposition}[Exit probabilities] 
\label{thm:dynkin-plmp}
Let $\domain$ be a domain satisfying the assumptions of \Cref{defn:source} with detectors specified by \Cref{defn:boundary-exit}. Let $\{\chi(t)\}_{t \geq 0}$ be a PLMP satisfying \Cref{a:baseline-scattering} and admissible source distribution $\phi$ (\Cref{defn:source}). Then, for each $j \in 1, \ldots J,$ the exit probabilities $\{p_j(\phi)\}$ satisfy
\begin{equation} \label{eq:exit-prob-plmp}
    p_j(\phi) = \int_\domain \int_{\domainv} w_j(x,v) \phi(x,v) \d x \d v,
\end{equation}
where
\begin{equation}\label{eq:w-pde-plmp}
    \begin{aligned}
        \gen w_j(x, v) &= 0, & (x,v) \in \domain \times \domainv; \\
        w_j(x,v) &= \Ind_{D_j}(x), & x \in \partial (\domain \times \domainv)^+.
    \end{aligned}
\end{equation}
\end{proposition}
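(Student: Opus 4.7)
The plan is to recycle the proof of \Cref{thm:dynkin} essentially verbatim, simply lifting everything to the joint position-velocity state space $\domain \times \domainv$ in which the PLMP $\chi(t) = (X(t), V(t))$ is genuinely Markov. First I would invoke Dynkin's identity for $\chi$ applied to the function $w_j$ solving \Cref{eq:w-pde-plmp}:
\begin{equation*}
    \E^{(x,v)}\big[w_j(\chi(t\wedge\tau))\big] = w_j(x,v) + \E^{(x,v)}\!\left[\int_0^{t\wedge\tau} \gen w_j(\chi(s))\, \d s\right].
\end{equation*}
Since $\gen w_j \equiv 0$ on $\domain\times\domainv$ and $\chi(s)$ lies in that open set for all $s<\tau$, the integrand vanishes pathwise, collapsing the identity to $w_j(x,v) = \E^{(x,v)}[w_j(\chi(t\wedge\tau))]$. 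The bound $\sup_x \E^x[\tau] < \infty$ from \Cref{a:domain} combined with $0 \leq w_j \leq 1$ then permits passage to $t\to\infty$ by dominated convergence, giving $w_j(x,v) = \E^{(x,v)}[w_j(\chi(\tau))]$.

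Next I would identify $w_j$ at the exit. Because the position component of a PLMP is piecewise linear and continuous (jumps affect only $V$), at the first time $\tau$ the trajectory leaves $\domain$ the instantaneous velocity satisfies $V(\tau)\cdot n(X(\tau)) > 0$, placing $\chi(\tau)\in \partial(\domain\times\domainv)^+$ where the boundary condition of \Cref{eq:w-pde-plmp} is posted. Thus $w_j(\chi(\tau)) = \Ind_{D_j}(X(\tau))$, and together with the previous step this gives $w_j(x,v) = \Prob^{(x,v)}(X(\tau)\in D_j)$. Integrating against the admissible initial density $\phi(x,v)$ and applying Fubini delivers \Cref{eq:exit-prob-plmp}.

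The main obstacle is the first step: Dynkin's formula in its classical form requires $w_j$ to be $C^1$ along trajectories, but for PLMPs the interior equation $\gen w_j = 0$ is a first-order transport equation with a scattering jump term whose solutions can be discontinuous across characteristics grazing the inflow boundary. To legitimize the identity, I would construct $w_j$ as the fixed point of the Neumann (iterated-collision) series associated with \Cref{eq:w-pde-plmp}, as developed in \citet{horton2023neutron}. This realization makes $t \mapsto w_j(\chi(t))$ a bounded $(\mathcal{F}_t)$-semimartingale whose predictable compensator is exactly $\int_0^t \gen w_j(\chi(s))\,\d s = 0$, after which optional stopping at the bounded (in expectation) time $\tau$ closes the argument without any higher regularity being needed on $w_j$.
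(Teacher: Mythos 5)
Your proof follows essentially the same route as the paper's: Dynkin's formula applied to the joint position--velocity process $\chi(t)=(X(t),V(t))$, vanishing of the interior term because $\gen w_j = 0$ in $\domain\times\domainv$, identification of $w_j(\chi(\tau)) = \Ind_{D_j}(X(\tau))$ since the exit point lies on $\partial(\domain\times\domainv)^+$, and finally integration of $w_j(x,v)=\Prob^{x,v}(X(\tau)\in D_j)$ against the source density $\phi$. The only difference is in how Dynkin's identity is legitimized --- you propose the Neumann (iterated-collision) series construction of $w_j$ with optional stopping at $\tau$, while the paper simply invokes the generalized It\^{o} formula for switching processes \citep[pg.~30]{yin2009hybrid} --- so your extra care with $t\wedge\tau$ and dominated convergence is a sound but inessential refinement of the same argument.
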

\begin{proof}
    The proof directly follows that of \Cref{thm:dynkin}. Dynkin's formula holds in the same way, following the generalized It\^{o} formula for switching diffusions. See, for example, \citet[pg.~30]{yin2009hybrid} for more details. Suppose that $w_j(x)$ satisfies \Cref{eq:w-definition}. Then 
    \begin{displaymath}
        \E^{x,v}[w_j(X(\tau),V(\tau))] = w_j(x,v) + \E^{x,v}\left[\int_0^\tau \gen w_j(X(t), V(t)) \d t\right]\,.
    \end{displaymath}
    As before, the integrand on the right-hand side is identically zero and since $X(\tau) \in \partial (\domain \times \domainv)^+$, we have that
\begin{equation} \label{eq:w-conclusion-plmp}
    \begin{aligned}
    w_j(x,v) = \E^{x,v}[w_j(X(\tau),V(\tau))] = \Prob^{x,v}(X(\tau) \in D_j)\,. 
    \end{aligned}
\end{equation}  
\Cref{eq:exit-prob} follows by integrating the initial condition over the source distribution.
\end{proof}

\begin{theorem}[Stochastic fountain exit rates]
    Under the conditions given by \Cref{defn:fountain} with a PLMP satisfying \Cref{a:baseline-scattering}, with an admissible source distribution $\phi$ satisfying \Cref{defn:source}, the steady-state exit process $N_j(t)$ defined by \Cref{eq:fountain-exit} is a Poisson process with the rate $\lambda p_j(\phi)$ where $p_j(\phi)$ is given in \Cref{thm:dynkin-plmp}.
\end{theorem}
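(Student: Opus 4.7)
The plan is to mirror the proof of \Cref{thm:fountain-exit-rates} essentially verbatim, since that argument rests on only three ingredients---the Poisson thinning property, Wald's equation, and the existence of a finite mean exit time---none of which are specific to the It\^{o} diffusion setting. The only real change is notational: the simultaneous-release auxiliary paths now carry a velocity component in addition to position, and the source distribution acts on the joint phase space $\domain \times \domainv$.

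First, I would introduce the simultaneous-release ensemble $\{\widetilde{\chi}_n(t)\}_{t \geq 0}$ with $\widetilde{\chi}_n(0) \iid \phi(x,v)$, driven by the PLMP generator from \Cref{a:baseline-scattering}, and denote its exit times by $\widetilde{\tau}_n$ and exit positions by $\widetilde{X}_n(\widetilde{\tau}_n)$. For a truncated fountain with start time $t_0 < 0$ and observation window $[0,T]$, the number of births $N$ in $[t_0, T]$ is $\text{Pois}((T - t_0)\lambda)$ and, conditional on $N$, the birth times are i.i.d.~$\text{Unif}(t_0,T)$. Setting $\chi_n(t) = \widetilde{\chi}_n(t - T_n)$ and $\tau_n = T_n + \widetilde{\tau}_n$ yields a valid realization of \Cref{defn:fountain}, by time-homogeneity of the PLMP dynamics.

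Next, following the diffusion proof line by line, I would apply Wald's equation together with the law of total probability over the uniform birth-time density, and then use the time-shift identities $\{\tau_n \in [0,T]\} = \{\widetilde{\tau}_n \in [\max(0,-T_n), T - T_n]\}$ and $\{X_n(\tau_n) \in D_j\} = \{\widetilde{X}_n(\widetilde{\tau}_n) \in D_j\}$ to express everything in terms of the restricted survival functions $S_j$ of \Cref{eq:survival-restricted}, now computed under the joint distribution $\phi(x,v)$. This yields
\begin{displaymath}
\E^\phi[N_j(T \with t_0)] = \lambda \int_{t_0}^{T} \bigl(S_j(\max(0,-t)) - S_j(T-t)\bigr) \d t,
\end{displaymath}
after which the truncation $t_0 \to -\infty$ is justified by the bound
\begin{displaymath}
\int_0^\infty S_j(y) \, \d y \leq \E^\phi[\widetilde{\tau}_1] \leq \sup_{(x,v)} \E^{(x,v)}[\tau] < \infty,
\end{displaymath}
which follows from \Cref{a:domain} (interpreted on the full PLMP phase space). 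Differentiating in $T$ and invoking \Cref{eq:surival-probability-detector} with $p_j(\phi)$ from \Cref{thm:dynkin-plmp} gives the instantaneous exit rate $\lambda p_j(\phi)$.

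Finally, to upgrade from ``$N_j(T)$ is Poisson-distributed for each fixed $T$'' to ``$\{N_j(T)\}_{T \geq 0}$ is a Poisson \emph{process},'' I would appeal to the standard marked-Poisson / independent-thinning picture: the underlying birth process $A$ is homogeneous Poisson on $\rbb$ with intensity $\lambda$, and marking each arrival by its future (independent) trajectory decomposes the collection of arrivals into independent thinned Poisson processes indexed by the disjoint exit events $\{X_n(\tau_n) \in D_j\}$; the rate computation above then identifies the intensity of $N_j$ as $\lambda p_j(\phi)$. The main technical obstacle---really the only one---is verifying that the phase-space integrability of $S_j$ carries over to the PLMP setting under the joint distribution $\phi(x,v)$; this follows from compact support of $\phi$ in the position variable (\Cref{defn:source}), the fixed-speed assumption on velocities, and the uniform mean-exit-time bound in \Cref{a:domain}.
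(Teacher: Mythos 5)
Your proposal is correct and follows the paper's intent exactly: the paper's own proof of this theorem consists of the single line ``Identical to \Cref{thm:fountain-exit-rates},'' and your write-up simply spells out the notational adaptations (joint phase-space state $(x,v)$, source $\phi(x,v)$, and the mean-exit-time bound carried over to the PLMP setting) that make that transfer rigorous. No further comparison is needed.
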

\begin{proof}
    Identical to \Cref{thm:fountain-exit-rates}.
\end{proof}

\begin{theorem}[Relationship to the BVP boundary flux]
    Define $u$ to be the solution to the BVP:
    \begin{equation}
    \label{eq:bvp-plmp}
    \begin{aligned}
        \gen^* u(x,v) &= -\lambda \phi(x, v), & x \in \domain \times \domainv; \hspace{0.75cm} \\
        u(x,v) &= 0, & x \in \partial (\domain \times \domainv)^-.
    \end{aligned}
    \end{equation}
    where $\gen^*$ is the adjoint operator for the baseline PLMP (\Cref{a:baseline-scattering}). Then the boundary exit rates are related to the steady-state PDE flux through the equation
    \begin{equation} \label{eq:plmp-exit-flux}
        \lambda p_j(\phi) = \int_{\partial^+} \!\! u(x) \Ind_{D_j}(x) (v \cdot n(x)) \, \d S(x) \d v.
    \end{equation}
\end{theorem}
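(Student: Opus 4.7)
The plan is to mirror the It\^o-diffusion proof by pairing $u$ against $w_j$ from \Cref{thm:dynkin-plmp}. Write $\langle f, g \rangle_{\domain \times \domainv} \coloneqq \int_\domain \int_\domainv f(x,v) g(x,v) \d v \d x$. Pairing both sides of \Cref{eq:bvp-plmp} with $w_j$ and invoking \Cref{eq:exit-prob-plmp} gives
\begin{equation*}
\langle w_j, \gen^* u \rangle_{\domain \times \domainv} = -\lambda \langle w_j, \phi \rangle_{\domain \times \domainv} = -\lambda p_j(\phi).
\end{equation*}
The target equation \eqref{eq:plmp-exit-flux} will then follow once we re-evaluate the left-hand side by moving $\gen^*$ back onto $w_j$.

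Using the explicit form of $\gen^*$ in \eqref{eq:PDMPgen}, the transfer splits into a transport part and a scattering part. For the transport term $-v \cdot \nabla u$, I fix $v$ and integrate by parts in $x$ over $\domain$, which yields
\begin{equation*}
\int_{\domainv}\!\int_\domain w_j\,(-v \cdot \nabla u)\,\d x\,\d v = \int_{\domainv}\!\int_\domain (v \cdot \nabla w_j)\,u\,\d x\,\d v - \int_{\domainv}\!\int_{\partial\domain} w_j\,u\,(v \cdot n(x))\,\d S(x)\,\d v .
\end{equation*}
For the scattering pair $-\sigma_s u + \sigma_s \int K(v',v,x) u(x,v')\,\d v'$, swapping the dummy velocity variable $v \leftrightarrow v'$ in the gain term exchanges $K(v',v,x)$ for $K(v,v',x)$ and converts the expression into precisely the scattering contribution of $\gen w_j$ paired with $u$, with no boundary contribution. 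Combining the two parts produces the adjoint identity
\begin{equation*}
\langle w_j, \gen^* u \rangle_{\domain \times \domainv} = \langle \gen w_j, u \rangle_{\domain \times \domainv} - \int_{\partial(\domain \times \domainv)} w_j(x,v)\, u(x,v)\,(v \cdot n(x))\,\d S(x)\,\d v.
\end{equation*}

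To finish, I use the two PDEs together. The interior integral vanishes because $\gen w_j \equiv 0$ on $\domain \times \domainv$ by \eqref{eq:w-pde-plmp}. Split the boundary integral as $\partial(\domain\times\domainv) = \partial^+ \cup \partial^-$. On $\partial^-$, the Dirichlet condition $u \equiv 0$ from \eqref{eq:bvp-plmp} kills the integrand. On $\partial^+$, the boundary condition $w_j(x,v) = \Ind_{D_j}(x)$ from \eqref{eq:w-pde-plmp} reduces the integrand to $u(x,v)\Ind_{D_j}(x)(v \cdot n(x))$. Equating the two expressions for $\langle w_j, \gen^* u\rangle_{\domain \times \domainv}$ and canceling the overall minus sign yields \eqref{eq:plmp-exit-flux}.

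The main obstacle is the self-adjointness bookkeeping for the scattering operator: one must verify that, under the $v \leftrightarrow v'$ relabeling, the loss and gain terms of $\gen^*$ reassemble into the loss and gain terms of $\gen$ against $w_j$, with no stray boundary contribution. This is exactly how $\gen^*$ was defined in \eqref{eq:PDMPgen}, so the verification is routine; the genuinely non-trivial content is the transport integration by parts and the way the two halves of $\partial(\domain\times\domainv)$ match up with the two Dirichlet conditions --- this pairing is what ensures only the $\partial^+$ integral, weighted by $\Ind_{D_j}(x)(v \cdot n(x))$, survives.
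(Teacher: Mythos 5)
Your proposal is correct and follows essentially the same route as the paper: pairing $w_j$ against the BVP for $u$, integrating the transport term by parts via the divergence theorem, swapping the dummy velocity variables (Fubini) in the scattering term, and then letting the Dirichlet data on $\partial(\domain\times\domainv)^-$ and the boundary condition $w_j = \Ind_{D_j}$ on $\partial(\domain\times\domainv)^+$ reduce the boundary integral to the flux, with $\gen w_j \equiv 0$ killing the interior term. The only difference is cosmetic — you package the computation as a single Green-type adjoint identity before imposing boundary conditions, whereas the paper identifies the flux term $g_j(\phi)$ immediately after the transport integration by parts.
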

The quantity on the right-hand side is the \emph{steady-state flux} through detector $j$ and, analogous to the diffusion context, we denote this quantity $g_j(\phi)$.
\begin{proof}
For a vector $v \in \rbb^d$ and scalar function $f : \rbb^d \to \rbb$, we note general relationship $\nabla \cdot \big(v f(x) \big) = v \cdot \nabla f(x)$. Together with the divergence theorem, for each fixed $v \in \domainv$, we have the following integration by parts formula (see, for example, \citet[pg.~45] 
{horton2023neutron})
\begin{displaymath}
\begin{aligned}
    &\int_{\domain \times \domainv} w(x,v) \big(v \cdot \nabla_x u(x,v)\big) \d x \d v \\
    & \qquad \qquad = \int_{\partial (\domain \times \domainv)} (v \cdot n(x)) w(x,v) u(x,v) \d x \d v - \int_{\domain \times \domainv} \big(v \cdot \nabla_x w(x,v)\big) u(x,v) \d x \d v
\end{aligned}    
\end{displaymath}
By definition, $u(x,v) = 0$ for $(x,v) \in \partial(\domain \times \domainv)^-$ and $w(x,v) = \Ind_{D_j}(x)$ for $(x,v) \in \partial(\domain \times \domainv)^+$. Recalling the definition of the PLMP flux \eqref{eq:plmp-exit-flux}, the above simplifies to 
\begin{displaymath}
\begin{aligned}
    &\int_{\domain \times \domainv} w(x,v) \big(v \cdot \nabla_x u(x,v)\big) \d x \d v \\
    & \qquad \qquad = g_j(\phi) - \int_{\domain \times \domainv} \big(v \cdot \nabla_x w(x,v)\big) u(x,v) \d x \d v
\end{aligned}    
\end{displaymath}

With this in mind, the exit probability for the $j$th detector satisfies
\begin{displaymath}
    \begin{aligned}
    \lambda \int_{\domain \times \domainv} w(x,v) \phi(x,v) \d x \d v &= - \int_{\domain \times \domainv} w(x,v) \gen^* u(x,v) \d x \d v \\
    &= \int_{\domainv}\int_{\Omega}  w_j(x,v)\, v\cdot \nabla_x u(x,v)\,\d x\, \d v \\ 
    & \qquad \qquad + \int_{\domainv} \int_{\Omega}  w_j(x,v) \big(\CSEC_s(x,v)\big) u(x,v)\d x\, \d v \\ 
    & \qquad \qquad - \int_{\domainv} \int_{\Omega}  w_j(x,v) \CSEC_s(x,v)\int_{\domainv} \SCK(v',v,x)u(x,v')\,\d v'\d x\, \d v \\ 
    &= g_j(\phi) - \int_{\domainv} \int_{\Omega} u(x,v) \, v \cdot \nabla_x w_j(x,v)\, \d x\, \d v \\ 
    & \qquad \qquad + \int_{\domainv} \int_{\Omega} u(x,v) \CSEC_s(x,v) w_j(x,v)\,\d x\, \d v \\ 
    & \qquad \qquad - \int_{\domainv} \int_{\Omega}  u(x,v) \CSEC_s(x,v)\int_{\domainv}\SCK(v,v',x)w_j(x,v')\,\d v'\d x\, \d v \\ 
    &= g_j(\theta) + \int_{\domainv} \int_{\Omega}   u(x,v) \, \gen w_j(x,v)\, \d x\, \d v.
    \end{aligned}
\end{displaymath}
where Fubini's theorem is used in the third equality. Since $\gen w_j(x,v) \equiv 0$ for $x \in \domain$, we have the desired result. This proof is a slight modification of Theorem 3.1 in \cite[pg.~45]{horton2023neutron}.
\end{proof}

\section{Analytical result: exit probability sensitivity}
\label{sec:fountain:sensitivity}

All of our results run through the observation that, for the fountain system we have described here, gradients can be computed from simulated data and restriction of the admissible source distributions to a compactly supported location-and-scale family. Importantly, the sensitivity with respect to this information has a universal form, meaning that no details about the baseline Markov generator $\gen$ appear in the statement of the sensitivity. The generator only appears through evaluations of the generated paths. We point out that due to a compactly supported distribution $\phi$ the usual log-likelihood sensitivity formula includes an additional term.

\begin{lemma} \label{thm:sensitivity}
Suppose that $\{\chi(t)\}_{t \geq 0}$ is a baseline  Markov process with location process $\{X(t)\}_{t \geq 0}$ in a domain $\domain$ satisfying \Cref{a:domain}. Suppose that the initial condition is distributed according to an admissible source distribution $\phi = \phi(\cdot \with \vartheta)$ with a compact support
$\mathcal{S}_\vartheta = \mathrm{supp}\,\phi(\cdot;\vartheta)\subset\R^d$.
Let $\tau$ be the exit time of the location process from the domain $\Omega$. Then for any smooth and bounded function $f : \domain \to \rbb$, the parametric sensitivity of $\E^\vartheta[f(X_\tau)]$ under the perturbation $\delta\vartheta$ 
is given by
\begin{equation}\label{eq:sensitivity} 
\begin{aligned}
\nabla_\vartheta \E^\vartheta[f(X_\tau)] \cdot \delta\vartheta &= \E^\vartheta\Big[ f(X_t) \,\zeta_\vartheta(X_0)\cdot\nabla_\vartheta\log\phi(X_0\with\vartheta)\Big] \\
& \qquad \qquad +
\int_{\partial \mathcal{S}_{\vartheta}} \E^x[f(X_t)] \phi(x\with\vartheta)\,\zeta_\vartheta(x)\cdot\nu_\mathcal{S}(x)\,dS\,,
\end{aligned}
\end{equation}
where $dS$ is the surface measure on the boundary of the initial condition support, $\partial \mathcal{S}_{\vartheta}$, and $\nu_\mathcal{S}$ is the outer normal on $\partial \mathcal{S}_{\vartheta}$.
The vector field $\zeta_\vartheta:\R^d\to\R^d$ is induced by the infinitesimal parameter perturbation $\delta\vartheta$ and the resulting change of the distribution support. 
\end{lemma}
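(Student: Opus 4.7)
} The overall strategy is to use the tower property to isolate all of the $\vartheta$-dependence into an integral over the moving support $\mathcal{S}_\vartheta$, and then apply a Reynolds transport (shape-derivative) argument. By the strong Markov property (or simply by conditioning on $X_0$) together with \Cref{a:domain}, the quantity $u(x) \coloneqq \E^x[f(X_\tau)]$ is a bounded function of $x$ alone, and
\begin{equation*}
\E^\vartheta[f(X_\tau)] = \int_{\mathcal{S}_\vartheta} u(x)\, \phi(x\with\vartheta)\,\d x\,.
\end{equation*}
The entire $\vartheta$-dependence therefore lives in the moving domain $\mathcal{S}_\vartheta$ and in the density $\phi(\cdot\with\vartheta)$, while $u$ is fixed. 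Because $\phi$ is compactly supported and its support \emph{itself} changes with $\vartheta$, the standard likelihood-ratio derivative would be ill-defined across the boundary; this is precisely the obstruction the lemma is designed to address.

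To implement the shape derivative, I would introduce the velocity field $\zeta_\vartheta$ as the infinitesimal displacement of material points under the perturbation $\delta\vartheta$. For the location-and-scale family of \Cref{defn:source}, the natural choice is the differential of $\Phi_\vartheta(y) = \theta + \beta y$, giving $\zeta_\vartheta(x) = \delta\theta + \frac{x-\theta}{\beta}\delta\beta$, whose restriction to $\partial\mathcal{S}_\vartheta$ is the outward normal velocity of the moving boundary. The Reynolds transport formula applied to $F(x,\vartheta) = u(x)\phi(x\with\vartheta)$ then yields
\begin{equation*}
\nabla_\vartheta \E^\vartheta[f(X_\tau)] \cdot \delta\vartheta
= \int_{\mathcal{S}_\vartheta} u(x)\, \nabla_\vartheta \phi(x\with\vartheta)\cdot\delta\vartheta \,\d x + \int_{\partial\mathcal{S}_\vartheta} u(x)\, \phi(x\with\vartheta)\, \zeta_\vartheta(x)\cdot\nu_\mathcal{S}(x)\,\d S\,.
\end{equation*}
The second term is already in the advertised form. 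For the first term, $\phi>0$ on the interior of $\mathcal{S}_\vartheta$, so I would rewrite $\nabla_\vartheta \phi = \phi\,\nabla_\vartheta \log\phi$ and recognize the integral as $\E^\vartheta[u(X_0)\,(\nabla_\vartheta\log\phi)(X_0\with\vartheta)\cdot\delta\vartheta]$; one more application of the tower property (using $u(X_0) = \E^{X_0}[f(X_\tau)]$) converts this to $\E^\vartheta[f(X_\tau)\,(\zeta_\vartheta(X_0)\cdot\nabla_\vartheta\log\phi)(X_0\with\vartheta)]$ in the paper's notation.

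The step I expect to be the main obstacle is justifying the Reynolds formula rigorously in this setting. The density $\phi(\cdot\with\vartheta)$ need not be smooth up to the boundary $\partial\mathcal{S}_\vartheta$ (in fact, from \Cref{eq:defn-base-distr}, $\Psi(1)$ may be strictly positive for the uniform profile and vanishes only as a limit for the bump profile), so one cannot naively differentiate under a fixed integral sign, nor can one assume that the boundary contribution vanishes. The cleanest remedy is to pull back to the fixed reference ball $B_1(0)$ via $x = \Phi_\vartheta(y) = \theta + \beta y$. After the change of variables the integrand becomes $u(\Phi_\vartheta(y))\,C^{-1}\Psi(|y|)$ on a $\vartheta$-independent domain, so one can differentiate under the integral using only that $u$ is bounded and $\zeta_\vartheta = \partial_\vartheta \Phi_\vartheta$ is smooth. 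Pushing forward the resulting identity (an interior term $\int u\, \zeta_\vartheta\cdot\nabla_x \phi$ plus a divergence term) through integration by parts on $\mathcal{S}_\vartheta$ reproduces the split above; the surface term that appears is exactly the Reynolds boundary flux, and it would have been missed by a naive log-likelihood computation. A final technical point is that smoothness of $\partial\mathcal{S}_\vartheta$ follows from the radial symmetry in \Cref{defn:source}, so the surface measure $\d S$ and outer normal $\nu_\mathcal{S}$ are well-defined throughout.
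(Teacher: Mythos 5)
Your proposal is correct and follows essentially the same route as the paper: both reduce the problem to $\int_{\mathcal{S}_\vartheta} \E^x[f(X_\tau)]\,\phi(x\with\vartheta)\,\d x$, transfer the moving support to a fixed domain by a diffeomorphism (the paper uses the general perturbation map $x\mapsto x+\epsilon\zeta_\vartheta(x)$, you use the affine pullback to the reference ball, a harmless specialization for the location-and-scale family), differentiate under the integral, and integrate by parts to split the result into the interior log-likelihood term and the Reynolds boundary-flux term. The only caveat is your remark that boundedness of $u$ suffices after the pullback: differentiating $u(\theta+\beta y)$ in $\vartheta$ still requires differentiability of $u$ on a neighborhood of the support, which is the same implicit regularity assumption the paper makes on $q\phi$ and its derivatives.
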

\begin{proof}
In the proof we omit the subscript $\vartheta$ in the notation of the vector field $\zeta_\vartheta$. The deformation vector field is induced by the change of the support $\mathrm{supp}\,\phi(\cdot,\vartheta+\delta\vartheta)$.
We denote
\[
q(x) = \E^x[f(X_\tau)]\,,
\]
for a process with $X_0 = x$. When integrating against a source distribution characterized by the parameter vector $\vartheta$, we write
\[
\Phi(\vartheta) \coloneqq \E^\vartheta[f(X_\tau)]\equiv \int_{\mathcal{S}_\vartheta} q(x)\,\phi(x\with\vartheta)\,dx\,.
\]
The change of the parameter $\vartheta$ also induces a change of the support $\mathcal{S}_{\vartheta}$ to
\begin{displaymath}
\mathcal{S}_{\vartheta+\epsilon\delta\vartheta} = \{ x + \epsilon\zeta(x)\,|\,x\in\mathcal{S}_\vartheta\}\,.
\end{displaymath}
Assuming that $\zeta:\R^d\to\R^d$ is a Lipschitz vector field then for sufficiently small $\epsilon$ the mapping $\MAP(x) \coloneqq x + \epsilon\zeta(x)$ 
is a diffeomorphism, and we can use the change of variables formula
to evaluate the integral over the perturbed domain 
$\mathcal{S}_{\vartheta+\epsilon\delta\vartheta}$ to obtain 
\begin{equation}\label{eq:changevars}
\begin{aligned}
\Phi(\vartheta+\epsilon\delta\vartheta) & \equiv\int_{\mathcal{S}_{\vartheta+\epsilon\delta\vartheta}} q(x) \phi(x\with\vartheta+\epsilon\delta\vartheta) \,\text{d} x \\
&= 
\int_{\mathcal{S}_{\vartheta}} 
q(F^\ep(x))\, \phi(F^\ep(x)\with \vartheta + \epsilon \delta \vartheta))\,\, 
|\det\nabla\MAP|
\,\text{d}x\,. \nonumber
\end{aligned}
\end{equation}
Our assumption on integrability of $q\,\phi$ and its derivatives grants the expansions
\begin{displaymath}
\begin{aligned}
q(x+\epsilon\zeta)\,\phi(x+\epsilon\zeta\with\vartheta+\epsilon\delta\vartheta) & = 
q(x)\,\phi(x\with\vartheta) + 
\epsilon\nabla q(x) \cdot\zeta(x) \,\phi(x\with\vartheta)  \\
& \qquad + \epsilon q(x) \nabla_x\phi(x\with\vartheta) \cdot\zeta(x) \\
& \qquad + \epsilon q(x) \,\nabla_\vartheta \phi(x\with\vartheta) \cdot \delta\vartheta + o(\epsilon), \text{ and}\\
\det\nabla\MAP & = \det(\mathrm{Id} + \epsilon\nabla\zeta) = 
1 + \epsilon\mathrm{Tr}\nabla\zeta + o(\epsilon)\,. \hspace{2 cm}
\end{aligned}    
\end{displaymath}
We can then rewrite \eqref{eq:changevars} as
\[
\begin{split}
\int_{\mathcal{S}_{\vartheta+\epsilon\delta\vartheta}} q(x) &\phi(x\with\vartheta+\epsilon\delta\vartheta)\,\text{d}x = 
\int_{\mathcal{S}_{\vartheta}} q(x)\,\phi(x\with\vartheta) \,\text{d} x \\
 &+ \epsilon\int_{\mathcal{S}_{\vartheta}} \nabla_x q(x)\cdot\zeta(x) \,\phi(x\with\vartheta)\,\text{d}x 
 + \epsilon\int_{\mathcal{S}_{\vartheta}} q(x) \nabla_x\phi(x\with\vartheta) \cdot\zeta(x) \,\text{d}x \\
 &+ \epsilon\int_{\mathcal{S}_{\vartheta}} q(x) \,\nabla_\vartheta\phi(x\with\vartheta)\cdot\delta\vartheta \,\text{d}x \\
 &+  \epsilon \int_{\mathcal{S}_{\vartheta}} q(x)\phi(x\with\vartheta)\,\mathrm{div}\zeta(x) \,\text{d}x + o(\epsilon)\,.
\end{split}
\]
Thus using $\mathrm{div}(q\phi\zeta) = \nabla_x(q\phi)\cdot\zeta + q\phi\,\mathrm{div}\zeta$ and the integration by parts (Gauss theorem) we
have the directional sensitivity with respect to the 
initial distribution:
\begin{equation}
\lim_{\epsilon\to 0} \frac{\Phi(\vartheta+\epsilon\delta\vartheta) - \Phi(\vartheta)}{\epsilon} = 
\int_{\mathcal{S}_{\vartheta}} q(x) \,\nabla_\vartheta\phi(x\with\vartheta)\cdot\delta\vartheta \,\text{d}x  +
\int_{\partial \mathcal{S}_{\vartheta}} q(x)\phi(x\with\vartheta)\,\zeta\cdot\nu\,\text{d}S\,,
\end{equation}
establishing the path-wise form of the sensitivity formula \eqref{eq:sensitivity}.
\end{proof}

For the case considered here, where the source is supported on the unit ball of the radius $\beta$ centered at $\theta$ with the spherically symmetric source density given by \eqref{eq:spherical_density}, we have the vector fields corresponding to perturbations of $\vartheta=(\theta,\beta)$ 
\begin{displaymath}
    \begin{aligned}
    & \zeta^i_{\theta_j} = \delta_{ij}\,, & \mbox{for computing $\partial_{\theta_i}$, $i=1,\dots,d$; } \\
    & \zeta_{\beta} = \frac{\displaystyle x - \theta}{\displaystyle |x -\theta|} \,, & \mbox{for computing $\partial_\beta$.}        
    \end{aligned}
\end{displaymath}
Furthermore, using the density $\phi$ given by \eqref{eq:spherical_density} straightforward calculations yield
\[
\begin{aligned}
    \frac{\partial\phi}{\partial\theta_i}(x\with \theta,\beta) &= 
    \frac{1}{\beta}\phi(x\with \theta,\beta)\psi^\prime\left(\frac{|x-\theta|}{\beta}\right) \frac{x_i-\theta_i}{|x-\theta|}\,, \\
    \frac{\partial\phi}{\partial\beta}(x\with \theta,\beta) &=
    \frac{1}{\beta}\phi(x\with \theta,\beta)\psi^\prime\left(\frac{|x-\theta|}{\beta}\right) \left(\frac{|x-\theta|}{\beta} - d\right). \\
\end{aligned}
\]
Substituting \eqref{eq:sensitivity} together with the counting observable $f(x) = \Ind_{D_j}(x)$ (which can be seen as the limit of a sequence of smooth bounded functions for which \Cref{thm:sensitivity} applies), we obtain formulas for the two-dimensional case considered in the numerical simulations in Section~\ref{sec:results}.
%
%
\begin{corollary}
Suppose that $\{X(t)\}_{t \geq 0}$ is the two-dimensional location process of a baseline Markov process with initial condition distributed according to an admissible source distribution $\phi = \phi(\cdot \with \theta, \beta)$ in a domain $\domain$ satisfying \Cref{a:domain}. Then
\begin{equation}\label{eq:grad}
\begin{aligned}
    \nabla_\theta \E^{\phi}\big[\Ind_{D_i}(X_\tau)\big] &= \frac{1}{\beta} \E^{\phi}\left[\Ind_{D_j}(X_\tau) 
    \psi^\prime\left(\frac{|X_0 - \theta|}{\beta}\right)
    \frac{X_0 - \theta}{|X_0 - \theta|}\right] \\
    & \qquad \qquad + \frac{\Psi(1)}{C \beta^d} \int_0^{2 \pi} \E^{x(\alpha)}[\Ind_{D_j}(X_\tau)] \binom{\cos(\alpha)}{\sin(\alpha)} \d \alpha; \text{ and,}
    \\
    \partial_\beta \E^{\phi}\big[\Ind_{D_j}(X_\tau)\big] &= 
    \frac{1}{\beta}\E^{\phi}\left[\Ind_{D_j}(X_\tau) 
    \psi^\prime\left(\frac{|X_0 - \theta|}{\beta}\right)
    \left(\frac{X_0 - \theta}{\beta} - d \right)\right] \\
    & \qquad \qquad + \frac{\Psi(1)}{C \beta^d} \int_0^{2\pi} \E^{x(\alpha)}[\Ind_{D_j}(X_\tau)] \, \d \alpha;
\end{aligned}
\end{equation}
where $x(\alpha) = \binom{\theta + \beta \cos(\alpha)}{\theta + \beta \sin(\alpha)}$ parametrizes the boundary of the source support.
\end{corollary}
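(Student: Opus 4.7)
The plan is to specialize \Cref{thm:sensitivity} to the spherically symmetric location-and-scale family from \Cref{defn:source} in dimension $d = 2$ and take $f = \Ind_{D_j}$. Because the indicator is discontinuous, I first regularize: pick a sequence of smooth bounded functions $f_n$ with $0 \le f_n \le 1$ converging pointwise to $\Ind_{D_j}$ outside $\partial D_j$, apply \Cref{thm:sensitivity} for each $f_n$, and then pass to the limit. Dominated convergence handles the interior expectation immediately, while pointwise convergence of $x \mapsto \E^x[f_n(X_\tau)]$ to $w_j(x) := \E^x[\Ind_{D_j}(X_\tau)]$ on the compact circle $\partial \mathcal{S}_\vartheta \subset \domain$, combined with boundedness by $1$, handles the boundary term.

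For the first identity, perturbing only the $i$-th component of $\theta$ is a pure translation of $\mathcal{S}_\vartheta = B_\beta(\theta)$, so the induced vector field is the constant $\zeta_{\theta_i}(x) \equiv e_i$. Differentiating $\log\phi(x\with\vartheta) = -\log C - d\log\beta - \psi(|x-\theta|/\beta)$ directly in $\theta_i$ yields the score $\beta^{-1}\psi'(|x-\theta|/\beta)(x_i - \theta_i)/|x-\theta|$, which matches the interior term. For the boundary piece, parameterize $\partial \mathcal{S}_\vartheta$ as $x(\alpha) = \theta + \beta(\cos\alpha, \sin\alpha)$ with outward normal $\nu(\alpha) = (\cos\alpha, \sin\alpha)$. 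On this circle $|x-\theta|/\beta \equiv 1$, so $\phi(x(\alpha)\with\vartheta) = \Psi(1)/(C\beta^d)$ is constant and pulls out, while $\zeta_{\theta_i} \cdot \nu$ equals $\cos\alpha$ (respectively $\sin\alpha$). Stacking the two coordinate directions produces the vector weight $(\cos\alpha,\sin\alpha)^\top$ displayed in the statement.

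For the second identity, I take the scale perturbation in the $\beta$ direction, which dilates the ball radially about $\theta$. The induced Lipschitz field near $\partial\mathcal{S}_\vartheta$ is $\zeta_\beta(x) = (x-\theta)/|x-\theta|$. Differentiating $\log\phi$ in $\beta$ produces both the scaling correction $-d/\beta$ from the $\beta^{-d}$ prefactor of $\phi$ and a $\psi'$ contribution from the argument of $\Psi$; substituting into \Cref{thm:sensitivity} gives the stated interior term up to reorganization. Crucially, on $\partial\mathcal{S}_\vartheta$ the radial field $\zeta_\beta$ coincides with the outward normal $\nu$, so $\zeta_\beta \cdot \nu \equiv 1$, and the boundary integral collapses to the unweighted angular average of $\E^{x(\alpha)}[\Ind_{D_j}(X_\tau)]$, times the same constant $\Psi(1)/(C\beta^d)$ factor.

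The main obstacle is really the regularization step: the boundary integral in \Cref{thm:sensitivity} requires enough regularity of $w_j$ on $\partial\mathcal{S}_\vartheta$ to interchange limit and circle integral. This is not automatic from bare assumptions on $\gen$, but it is available here because $\partial\mathcal{S}_\vartheta$ lies strictly inside $\domain$ by admissibility of the source, and $w_j$ solves the interior boundary-value problem of \Cref{thm:dynkin} (respectively \Cref{thm:dynkin-plmp}), hence is continuous on compact subsets of $\domain$ for the baseline diffusions and PLMPs under our standing assumptions. With that in hand, the rest is a bookkeeping exercise: chain-ruling $\log\phi$ and evaluating $\zeta\cdot\nu$ on a circle.
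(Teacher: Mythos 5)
Your proposal is correct and follows essentially the same route as the paper: specialize \Cref{thm:sensitivity} with the translation field $\zeta_{\theta_i}=e_i$ and the radial dilation field $\zeta_\beta=(x-\theta)/|x-\theta|$, compute $\nabla_\vartheta\log\phi$ for the location-and-scale density \eqref{eq:spherical_density}, evaluate the boundary term on the circle $x(\alpha)=\theta+\beta(\cos\alpha,\sin\alpha)$ where $\phi\equiv\Psi(1)/(C\beta^d)$, and treat $f=\Ind_{D_j}$ as a limit of smooth bounded functions. Your write-up is in fact slightly more careful than the paper's, which only parenthetically invokes the smooth-approximation step, whereas you justify the limit interchange in the boundary integral via continuity of $w_j$ on the compact circle $\partial\mathcal{S}_\vartheta\subset\domain$.
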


At this point the advantage of using a mollified indicator function (the bump function) for our source distribution becomes apparent. Because the limit of the source distribution from the interior of the support is what appears the boundary terms of \Cref{eq:grad}, use of an indicator function requires a sampling scheme on the surface of the source distribution and additional computation. On the other hand, the bump function approaches zero at its support boundary and therefore there is no contribution from a boundary term when computing gradients. The trade-off is the care that must be taken to sample from the bump function. We take an importance sampling approach, where elements are drawn from the uniform distribution on the support of the source, and then outcomes are re-weighted by the relative likelihood of that sampled initial condition coming from the bump function distribution.

\begin{definition}[Monte Carlo approximation scheme.]
\label{defn:MC-samples}
    Let $M \in \mathbb{N}$ and an admissible parameter vector $(\theta,\beta)$ be given. Let $U_m$ be an iid sequence of draws from the uniform distribution on the unit ball and for each $m$ define $\xi_m = \theta + \beta U_m$.
    
    Let $\{X(t)\}_{t \geq 0}$ be the location process of a baseline Markov process driven by the generator $\gen$ with initial location $X_0 \sim \phi(\cdot \with \theta, \beta)$ and $\domain$-exit time $\tau$. Let $\{X_m(t)\}$ be an iid sequence of simulated Markov processes driven by $\gen$ with initial conditions $X_m(0) = \xi_m$, and let $\{\tau_m\}$ be there associated $\domain$-exit times. 

    Then we define
\begin{equation} \label{eq:sensitivity-pathwise}
\begin{aligned}
    \widehat{E}^\theta\big[\Ind_{D_j}(X(\tau))\big] &= \frac{1}{M} \sum_{m = 1}^M \Ind_{D_i}\big(X_{m}(\tau)\big)
    \phi(U_m\with 0,1)\,\mathrm{Vol}(B_\beta)\,,  \\
    \widehat{\nabla_\theta \E^\theta}\big[\Ind_{D_j}(X(\tau))\big] &= 
    \frac{1}{M} \sum_{m = 1}^M \Ind_{D_j}\big(X_{m}(\tau)\big)
    \psi^\prime\left(\frac{|\xi_m - \theta|}{\beta}\right)\frac{\xi_m - \theta}{\beta |\xi_m - \theta|}  \phi(U_m \with 0,1) \mathrm{Vol}(B_\beta)  \,,
\end{aligned}
\end{equation}
\end{definition}

With the above definitions in hand, we can articulate the individual steps of our source location identification algorithm.

\begin{definition}[Gradient descent via path simulations] \label{defn:descent}
    Let $\{M_k\}$, $k \in \mathbb{N}$, denote a sequence of ensemble sizes to be used for exit probability and sensitivity estimation. Let $\{h_k\} \subset \mathbb{R}_+$, $k \in \mathbb{N}$, denote a sequence of algorithmic step sizes. Let $K$ either be a pre-defined number of algorithm steps, or a prescribed stopping rule.

    Then for a given data vector $\widehat{\mathbf{p}}$ and initial parameter guess $(\tilde \theta_0, \beta^0)$, where $\beta^0$ is the (assumed to be known) source distribution radius. We define the sequence of location parameter approximations $\{\tilde \theta_k\}_{k=1}^K$ by the iteration
    \begin{equation}
        \tilde \theta_{k + 1} \coloneqq \tilde \theta_k - h_k \sum_{i=1}^D \left(\widehat{p}_j - \widehat{E}^{\tilde \theta_k}\big(\Ind_{D_i}(X_\tau)\big)\right) \widehat{\nabla_{\theta} E^{\tilde \theta_k}}\big(\Ind_{D_i}(X_\tau)\big)
    \end{equation}
    where the estimated exit probability and gradient is defined by \Cref{defn:MC-samples}, computed with $M = M_k$ and $\vartheta = (\tilde \theta_k, \beta^0)$ respectively.
    
    Our algorithmic source location estimator is then
    \begin{equation}
        \widehat{\theta}_{M}(\widehat{\mathbf{p}}) \coloneqq \tilde \theta_K.
    \end{equation}
\end{definition}

\section{Numerical Results}
\label{sec:results}

We now share some simple demonstrations of the inference algorithm. In this section, we focus on estimating the source location and assume that the size of the detector is known. Due to the content in \Cref{sec:theory}, we have shown that exit rate estimation for the stochastic fountain system can be pursued using simulations of the simultaneous release system.
Indeed in \Cref{sec:results:MN} we provide estimation paths for multiple experimental settings: two It\^{o} diffusions and three PLMPs. 

In subsequent sections, we explore the impact of the two main sources of estimator variance. In \Cref{sec:results:M} we show the impact of $M$, which denotes the number of sample paths used in Monte Carlo approximation of exit probabilities and associated sensitivities with respect to initial condition. In \Cref{sec:results:N} we turn our attention to the statistical notion of consistency by numerically assessing the reduction in estimator variance that follows from increasing $N$, which denotes the number of particle exit observations that comprise ``experimental data.'' Ultimately we are able to show a contrast in the magnitudes of these two sources of error, and see that in our chosen scenarios, the size of the experimental data $N$ is a greater constraint than the size of the path simulation ensembles $M$.

\subsection{Pathwise estimates of gradients is effective across multiple generating Markov processses.} 
\label{sec:results:MN}

In our numerical experiments we took the domain $\domain$ to be the unit circle centered at the origin. Five detectors were equally spaced on the boundary. We set the source location to be $\theta^0 = (-0.4, 0.1)$ and scale of the source distribution was $\beta^0 = 0.15$. The parameters of the stochastic processes are listed in the captions of \Cref{fig:trajectories}. We constructed two scenarios for It\^o diffusion and two scenarios for the transport process. In the PLMP scenarios we add particle absorption as a possible outcome. Even though we excluded absorption in the foregoing theoretical development, the results hold with appropriate modification. To emphasize the robustness of the results with respect to adding particle absorption, in this section we introduce the possibility of a state-dependent absorption rate $\sigma_a$. Given a particle path, the absorption time $\tau_{\mathrm{abs}}$ is defined by the survival function
\begin{displaymath}
    \Prob_{x,v}\big(\tau_{\mathrm{abs}} > t\big) = e^{-\int_0^t \sigma_a(\chi(t')) \d t'}\,.
\end{displaymath}
If a particle's absorption time is less than its exit time, then it will not be counted in a detector's exit count.

The four numerical experiments were set up as follows:
\begin{displaymath}
    \begin{aligned}
        \text{Experiment 1},& \text{ Fig.~\ref{fig:trajectory_brownian}:} \quad \text{It\^{o} Diffusion, } b = (0,0), \eta = 1/2;\\
        \text{Experiment 2,}& \text{ Fig.~\ref{fig:trajectory_diffusion}:} \quad \text{It\^{o} Diffusion, } b = (-2,2), \eta = 1/2; \\
        \text{Experiment 3,}& \text{ Fig.~\ref{fig:trajectory_scattering_uniform}:} \quad \text{PLMP, } c = 0.1, \sigma_a = 0.1, \sigma_s = 0.8,\\ 
        &\qquad \qquad \qquad \qquad \pi_s \sim \text{Unif}(0,2\pi);\\
        \text{Experiment 4,}& \text{ Fig.~\ref{fig:trajectory_scattering_preferred}:} \quad \text{PLMP, } c = 0.1, \sigma_a = 0.1, \sigma_s = 0.8, \\
        & \qquad \qquad \qquad \quad \quad \pi_s \sim \mathrm{Norm}(\pi/3,\varsigma^2 \with (0,2\pi]),\\ 
        &\qquad \qquad \qquad \qquad \mbox{with two cases $\varsigma=2$, and $\varsigma=10$.}
    \end{aligned}
\end{displaymath}
For the PLMP cases, the velocity domain $\domainv$ is a circle of radius $c$. The stated distributions $\pi_s$ refer to the choice of angle off the $x$-axis for each velocity state, and induces the jump distribution $K(v, v',x)$ through the relatinship $v = \big(c \cos(\alpha), c \sin(\alpha)\big)$, where $\alpha \sim \pi_s$. (Note that our scattering distribution is does not depend on location or the current angle of motion.) The notation $\mathrm{Norm}(\pi/3,\varsigma^2 \with (0,2\pi])$ for the scattering distribution $\pi_s$ in the last experiment refers to the truncated Gaussian distribution with the mean $\pi/3$ and variance $\varsigma^2$.

For the diffusion cases, we sought to generate the exit probability data and associated inference using different techniques: for the data, we used the BVP characterization to compute exit probabilities; and for the inference scheme, we used path simulations for exit probabilities and gradient estimation.

\begin{figure}[h!]
    \centering
    \begin{subfigure}{0.45\textwidth}
        \includegraphics[width=\textwidth]{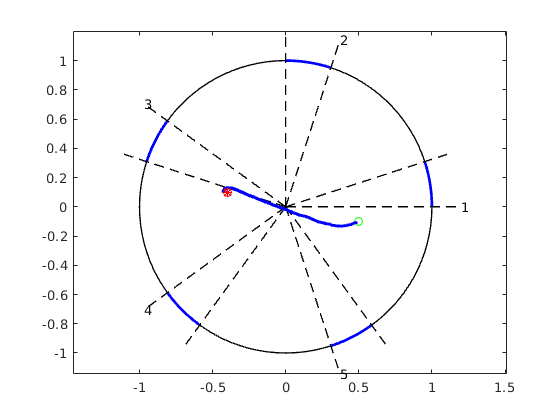}~
        \caption{Brownian motion, $N=\infty$, $M=10^4$.}
        \label{fig:trajectory_brownian}
    \end{subfigure}
    \hfill
    \begin{subfigure}{0.45\textwidth}
        \includegraphics[width=\textwidth]{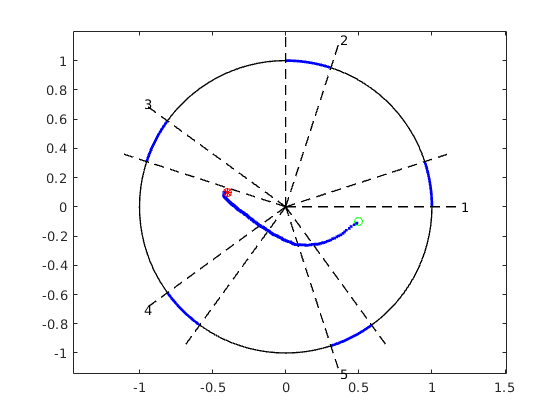}~
        \caption{Diffusion with the drift $b = \binom{-2}{2}$, $N=5\times 10^4$, $M=10^4$.}
        \label{fig:trajectory_diffusion}        
    \end{subfigure}

    \begin{subfigure}{0.45\textwidth}
        \includegraphics[width=\textwidth]{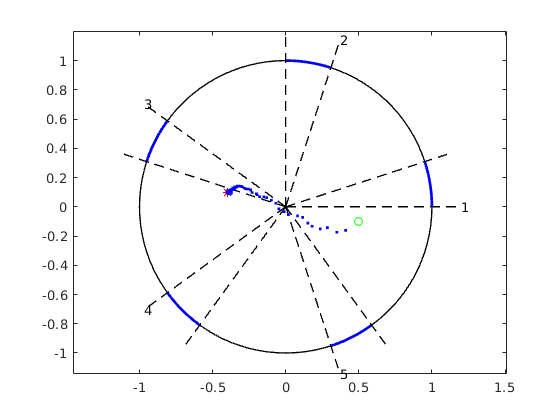}~
        \caption{Transport process with uniform scattering on $S^1$, and the rates $\sigma_a = 0.1$ (absorption), $\sigma_s = 0.8$ (scattering), $N=5\times 10^4$, 
        $M=10^4$.}
        \label{fig:trajectory_scattering_uniform} 
    \end{subfigure}
    \hfill
    \begin{subfigure}{0.45\textwidth}
        \includegraphics[width=\textwidth]{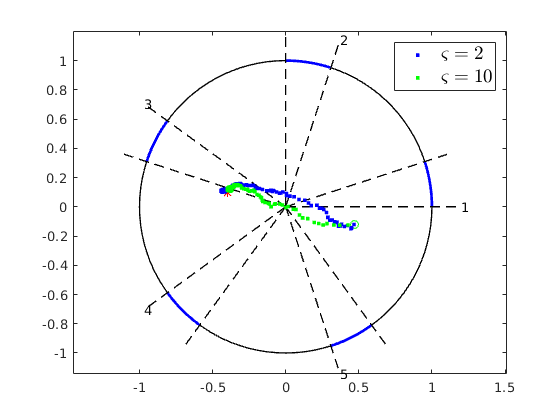}~
        \caption{Transport process with scattering on $S^1$, to a preferred direction; a random direction 
        from the truncated normal distribution with the mean $\pi/3$, $\varsigma=2$, $\varsigma=10$, and the rates $\sigma_a = 0.1$ (absorption), $\sigma_s = 0.8$ (scattering), $N=5\times 10^4$, $M=10^4$}
        \label{fig:trajectory_scattering_preferred} 
    \end{subfigure}
    \caption{Source identification trajectories for the four experimental scenarios described in \Cref{sec:results} using stochastic gradient descent (\Cref{defn:descent}) with $M = 10^4$, $K = 1000$ and step size $h=0.01$.}\label{fig:trajectories}
\end{figure}

To be precise, in the It\^{o} diffusion cases, we simulated exit counts in the following way. First, we articulated the exit probabilities as a function of initial condition in terms of the BVP \eqref{eq:w-definition}. We then numerically integrated against the source distribution centered at $\theta^0$ to establish the ``ground truth'' probability of exit through each detector by particles whose initial conditions were drawn from $\phi^0 = \phi(\cdot \with \theta^0, \beta^0)$:
\begin{equation} \label{eq:exit-prob-ground-truth}
    p_j(\phi^0) \coloneqq \langle w_j, \phi^0 \rangle_\domain
\end{equation}
To compute these quantities, we used importance sampling to approximate the integrals against the source distribution, \Cref{eq:exit-prob-ground-truth}. We did this by taking sample locations $\xi_i \sim \text{Unif}\big(\text{supp}(\phi^0)\big)$, evaluating $w_j(\xi_i)$, and weighting the exit probabilities by their relative likelihood to be drawn from the source distribution $\phi^0$. That is 
\begin{equation}\label{eq:p-bar-mc}
p_j(\phi) = \mathrm{Vol}(\mathcal{B}_\beta) \int_\Omega w_j(x) \phi(x)\,\frac{\d x}{\mathrm{Vol}(\mathcal{B}_\beta)} \approx \pi \beta^2\frac{1}{K}\sum_{k=1}^K w_j^h(\xi_k) \phi(\xi_k) \eqqcolon \bar{p}_j(\phi)
\end{equation}
In the numerical approximation of the PDE solutions, denoted $w_j^h(\xi_k)$, we used the finite element method to solve a discretization of \eqref{eq:w-definition}. We used quadratic elements on a triangular mesh with a maximum element size of $h =.01$ using MATLAB PDE TOOLBOX\footnote{MATLAB Version (R2023a) and Partial Differential Equation Toolbox Version 3.10}.

With these probabilities in hand, we generated the vector of exit counts $\mathbf{N} = (N_1, N_2, \ldots N_J)$ through iid samples from the multinomial distribution weighted according to the exit probabilities conditioned on particle initial locations. That is to say, let $N = |\mathbf{N}|$ and $\{\mathcal{E}_n\}_{n=1}^N$ be iid with common distribution
\begin{displaymath}
    \mathcal{E} \sim \text{Multinomial}\big(p_0(\phi^0), p_1(\phi^0), \ldots p_J(\phi^0)\big)
\end{displaymath}
where $p_0(\phi^0)$ is the probability that a given particle does not exit through any of the detectors. Then for each detector $j$, we define $N_j = \sum_{n=1}^N \Ind_j(\mathcal{E}_n)$ and compute the vector of target probabilities 
\begin{displaymath}
\widehat{\mathbf{p}} = \mathbf{N}/N
\end{displaymath}
which we consider to be ``the data'' used for inference. When we wanted to remove variance due to individual path outcomes, we directly used the PDE quantities (the function $w$) and write $N = \infty$. 

To perform the gradient descent, we followed the algorithm implied by \Cref{defn:descent} to generate the sequence of locations $\{\vartheta_k\}_{k=1}^K$ with uniform step sizes $\alpha = 0.01$ and $K = 1000$ steps. In the numerical experiment supporting \Cref{fig:trajectories}, at each step we approximated $\widehat{\E^{\vartheta_k}}\big(\Ind_{X(\tau) \in D_j}\big)$ and $\widehat{\nabla_\theta \E^{\vartheta_k}}\big(\Ind_{X(\tau) \in D_j}\big)$ with $M = 10^4$ path samples. 

The start of the gradient descent for the source location was $(0.5,-0.05)$ in all cases. While the approach to the true source location parameter was successful in all four cases, it is interesting to note that the paths were distinct. In the absence of drift, the Brownian motion experiment and uniform scattering angle experiment yielded almost straightline searches. But the introduction of drift in the baseline Markov process resulted in curved trajectories.

\begin{figure}[h!]
    \centering
    \includegraphics[height = 2in,trim={0 0 1cm 0},clip]{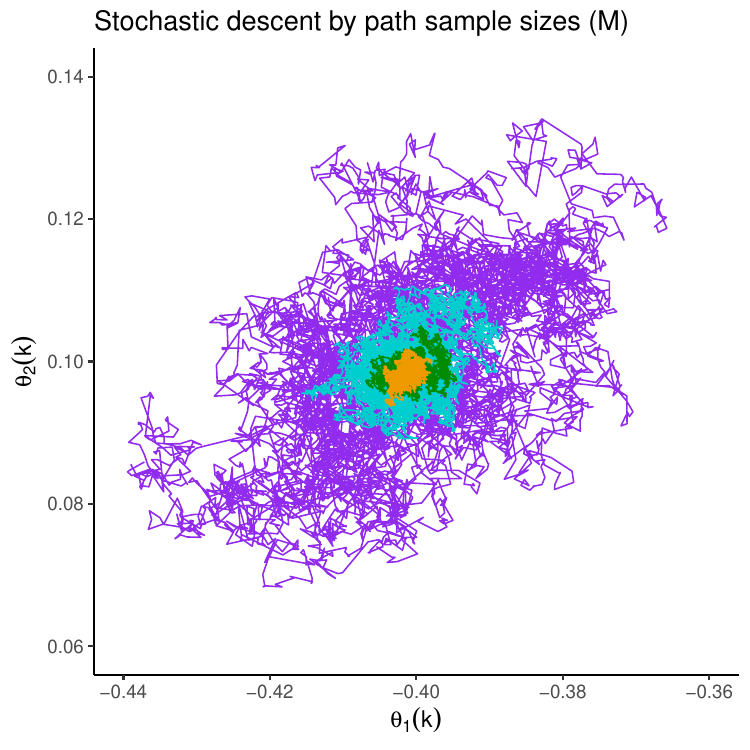}
    \includegraphics[height = 2.05in,trim={0cm 0 0 0},clip]{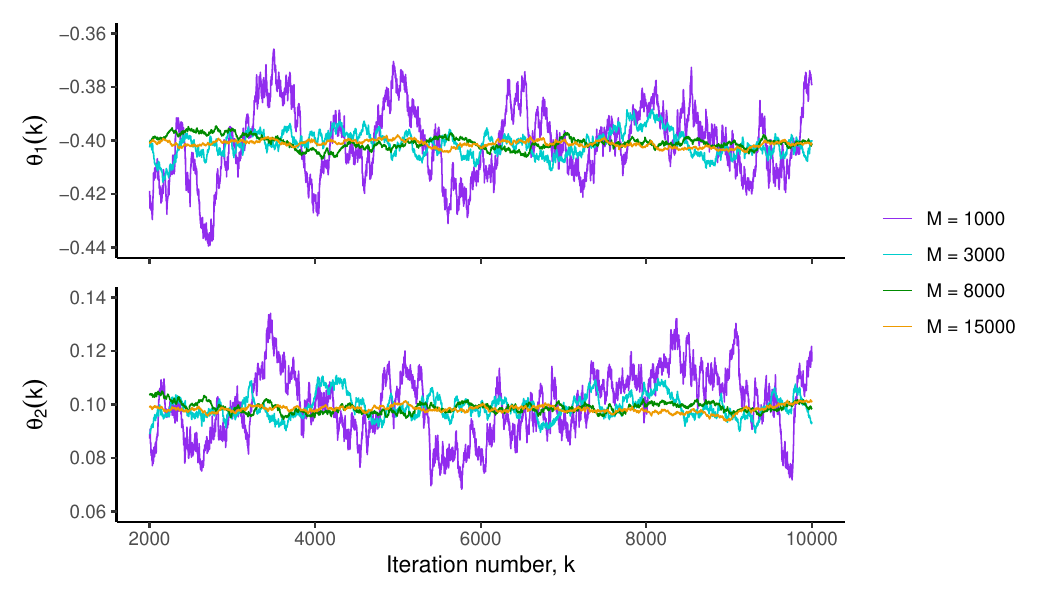}
    \caption{A further exploration of the stochastic gradient descent used in Experiment 1. After the initial approach to a neighborhood of the source parameter, we display the fluctuations that emerge as a function of the number of paths $M$ used for each estimation of the gradient. The iterations of the algorithm had the stepsize $h=0.01$ are shown after burn-in period of 2000 steps. In the experiment, there are five equally spaced detectors on the unit circle, with the source parameters $\theta^0 = (-0.4,0.1)$, $\beta^0 = 0.15$. The initial guess for the source location parameter was $\tilde\theta_0 = (0.5,-0.05)$. The target exit probabilities were obtained from numerical solutions of the appropriate $w_j$ PDE problems (i.e., $N = \infty$).} 
    \label{fig:search-M}
\end{figure}

\subsection{Assessing impact on simulated particle ensemble size.}
\label{sec:results:M}

To assess the impact of estimating probabilities and gradients through path simulations, we conducted an experiment for each ensemble size $M = \{1000, 3000, 8000, 15000\}$, following the form described in the previous section. The baseline Markov process for these experiments was Brownian motion with $\eta = 1/2$. The domain was the unit circle, and the true source location was $\theta^0 = (-0.4, 0.1)$ with 
$\beta^0 = 0.15$. We generated the data and the driving probability vector $\mathbf{p}$ in a manner similar to the previous section, with $N = \infty$. In this way, we minimized the error due to sample size as much as possible.

In \Cref{fig:search-M} we display estimator paths for the final 8000 values for $\tilde\theta_k$ in a single search for $\tilde\theta_{k,1}$ and $\tilde\theta_{k,2}$, respectively. As $M$ increases, the variance decreases as expected. We note that there is an apparent bias in the results, but this may be due to the finite sample size of the data. The exclusion of the true values is a reminder that what is displayed is not a proper construction of a confidence region. Such a quantity requires an understanding of variance due to data sample size which is not assessed here. 

\subsection{Numerical consistency for Brownian motion case}
\label{sec:results:N}

To demonstrate that the source identification problem is well posed for binned particle count data, we took a numerical approach. In contrast to the previous sections in which we looked to assess the validity of the gradient descent approximation, here we wish to establish that, as the number of observed particle exits increases, the estimator for the center of the source distribution improves. Following the terminology of the statistics literature, we say that an estimator is consistent if, viewing the estimator $\hat \theta_N$ as a deterministic function of the data when there are $N$ data points, $\lim_{N \to \infty} \Prob^{\theta_0}(|\hat \theta_N - \theta_0| > \epsilon) = 0$ for any $\epsilon > 0$. In the numerical experiments described below, we find that the rate of convergence matches the typical estimator rate $~N^{-1/2}$, \Cref{fig:N-consistency}. A rigorous study of consistency would require a study of the regularity of solutions to $u$ and $w$ that are beyond the scope of the present work.

In our numerical consistency experiments, we effectively taking $M=\infty$, i.~e.~assuming that the gradient descent finds the true global minimum when presented with an inference challenge. We approximate this by a brute force method, taking a collection of 10000 points near the true source parameter $\theta$ and for each $\theta$, computing an MC approximation for the exit probabilities of an ensemble of particles that have $\phi(\cdot \with \theta, \beta^0)$ for their initial location distribution. Adapting the notation presented in \eqref{eq:p-bar-mc}, $\bar p_j(\theta)$ denotes the probability of exit from detector $j$. The estimator can then be expressed
\begin{equation}
    \widehat{\theta}_\text{sweep}(\mathbf{p}) = \argmin_{\theta \in \domain} \left\{\sum_{j=1}^J \frac{1}{2} \Big|\bar{p_j}(\theta) - \widehat{p}_j\Big|^2\right\}.
\end{equation}

\begin{figure}
    \centering
    \begin{subfigure}{0.45\textwidth}
    \includegraphics[width = \textwidth]{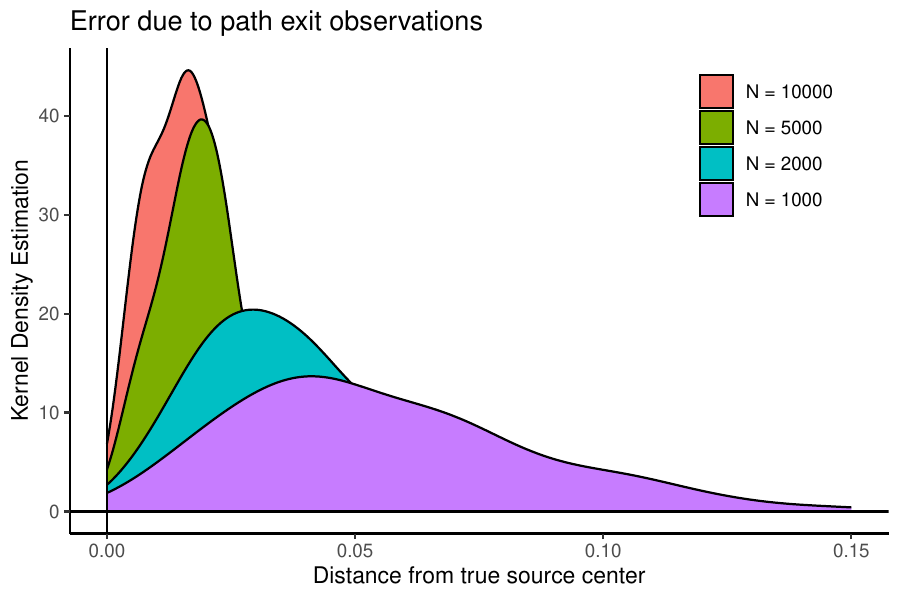}
    \caption{Distribution of estimator error due to finite data size.}
    \end{subfigure}
    \begin{subfigure}{0.45\textwidth}
    \includegraphics[width = \textwidth]{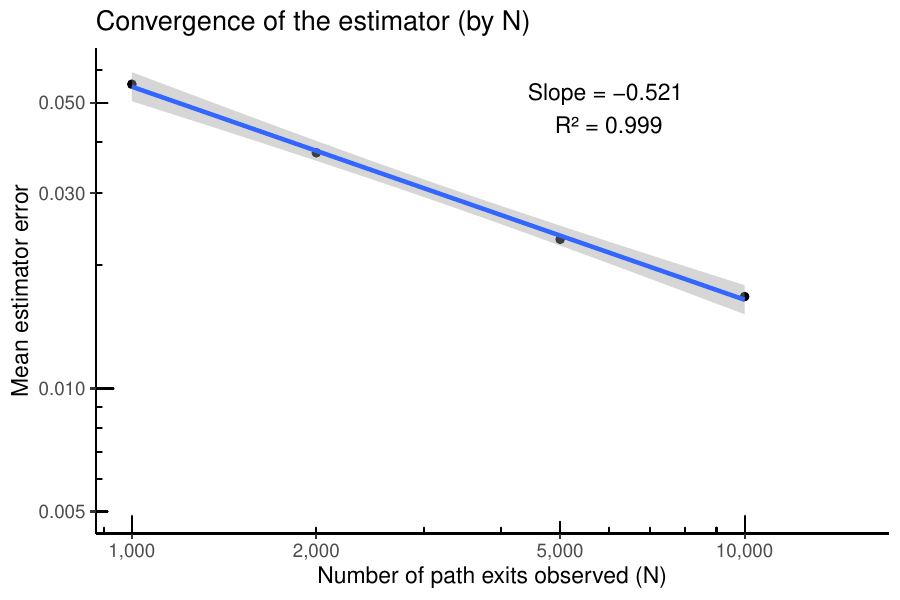}
    \caption{Mean error scaling $\sim N^{-1/2}$.\\
    \phantom{......}}
    \end{subfigure}

    \begin{subfigure}{0.45\textwidth}
    \includegraphics[width =\textwidth]{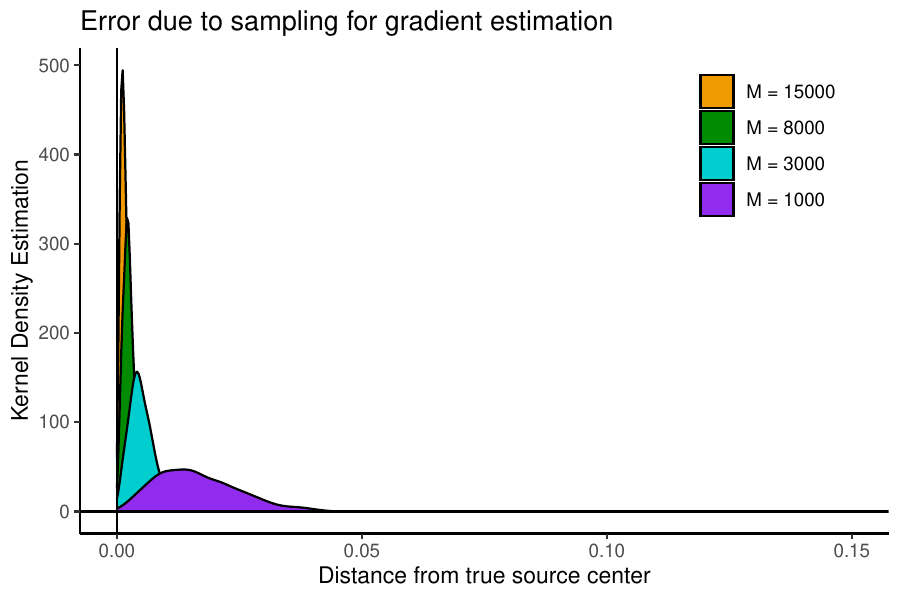}
    \caption{Distribution of estimator error due to gradient approximation by paths.}
    \end{subfigure}
    \begin{subfigure}{0.45\textwidth}
    \includegraphics[width = \textwidth]{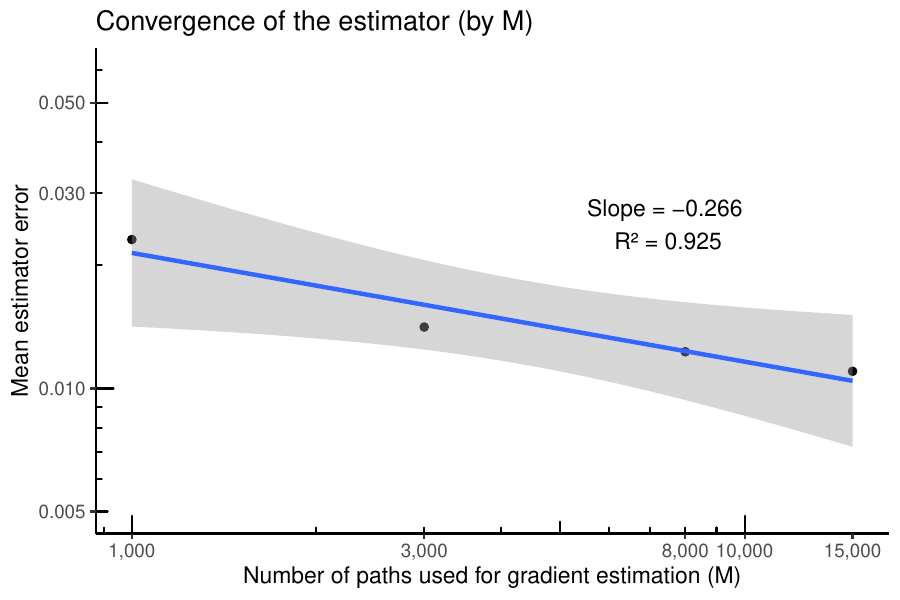}
    \caption{Mean error scaling $\sim M^{-1/4}$.\\
    \phantom{......}}
    \end{subfigure}
    \caption{Comparison of the studied estimator statistical properties in two cases: (i) a finite number of paths is observed to exit with the ``perfect'' stochastic gradient descent $(N<\infty,M=\infty)$ (top row), (ii) a ``perfect'' observation of the exit probabilities with limited estimation of gradients $(N=\infty,M<\infty)$ (bottom row).
     The observed scaling of the mean error with the sample size $\langle \text{|error|} \rangle \sim N^{-1/2}$ is as expected; however, the scaling (ii) is only $\langle|\text{error}|\rangle \sim M^{-1/4}$.}
    \label{fig:N-consistency}
\end{figure}

In the first panel of \Cref{fig:N-consistency}, we display a kernel density estimation for the distribution of distances from 
$\theta_{\text{sweep}}$ to the true source center $\theta^0 = (-0.4, 0.1)$ for $N \in \{1000, 2000, 5000, 10000\}$ The means of these distributions are displayed as a function of $N$ in the right panel, and the regression shows good agreement with an $N^{-1/2}$ scaling. An important initial note concerns what is meant by the ``number of observed particle exits'' though. In the simultaneous release system, we are able to observe particles that hit the detectors and those that do not. For the stochastic fountain, we only observe particles that hit the detectors. So, for the example we study here, there is approximately a $1/4$ probability that a given particle will hit a detector. One thousand samples in the fountain case is roughly equivalent to $4000$ samples in the simultaneous release system, assuming the true values of the particle birth rate $\lambda$ and the detector observation duration $T$ are known. 
Preliminary work on the consistency of the location scale parameter $\beta$ showed ambiguous results, and we reserve that inference challenge for future work.

\section{Discussion}

We have developed a method for finding the source of stochastically evolving particles based on their arrivals at a set of detectors on the boundary of a given domain. This is a stochastic version of classical source identification problems that appear in the literature for PDE inverse problems. In a sense, we have developed a ``stochastic-process-constrained optimization'' regime that is analogous to well-known ``PDE-constrained optimization'' methods. To this end, we have rigorously established the relationship between the boundary flux of steady-state PDEs with the exit rate of particles from a stochastic fountain ensemble that we define in \Cref{sec:fountain}. The stochastic version of the problem requires the computation of derivatives of mathematical expectations, which we tackle via a ``shape derivative'' method. Operating our investigation under the assumption that MC methods have advantages over deterministic numerical techniques in certain settings, we also demonstrated that gradients with respect to source distribution parameters can be computed pathwise with the same information necessary to produce exit probability results. 

Throughout this work, we emphasized finding the source location and assumed other parameters, particularly the source distribution's scale parameter $\beta$, are given. We believe that a targeted investigation is needed regarding estimation of $\beta$. In fact, for the Brownian motion example presented in \Cref{sec:results} the scale parameter can be unidentifiable (see \cite{elbadia2000inverse} for related discussion). If particle degradation is introduced, then the scale parameter may become identifiable, but with a much larger variance than source location estimator. Quantifying this question would follow from computation of the Fisher information of the estimators, but unlike the parameter gradients, we do not see at present how to estimate the Fisher information from path data alone.

The most significant limitation of the method we have presented is the need for simulated particles to regularly hit the boundary detectors. There exist adjoint methods that can improve efficiency of simulation \citep{smith2025stochastic}, but in complex geometries or situations where the detectors and source are small relative to $\domain$, substantial innovation may be required to improve performance. Nevertheless, the robustness of the method with respect to the underlying generating process is a promising note for this method.

\section{Acknowledgments}

We thank Darby Smith, Brian Franke, and Aaron Olson of Sandia National Labs, and Jonathan Mattingly of Duke University, for helpful comments and discussion.
RBL and PP thank Andreas Kyprianou and the MaThRad group for their hospitality and organization of a meeting to the Department of Statistics at the University of Warwick in which a preliminary version of this paper was presented. 

The work of RBL and SAM was supported by the Laboratory Directed Research and Development program (Project 222358) at Sandia National Laboratories, a multimission laboratory managed and operated by National Technology and Engineering Solutions of Sandia LLC, a wholly owned subsidiary of Honeywell International Inc. for the U.S. Department of Energy’s National Nuclear Security Administration under contract DE-NA0003525.

SAM was also supported by the NSF-Simons Southeast Center for Mathematics and Biology (SCMB) through NSF-DMS1764406 and Simons Foundation-SFARI 594594

\bibliographystyle{abbrvnat}
\bibliography{_source_id}

\begin{thebibliography}{34}
\providecommand{\natexlab}[1]{#1}
\providecommand{\url}[1]{\texttt{#1}}
\expandafter\ifx\csname urlstyle\endcsname\relax
  \providecommand{\doi}[1]{doi: #1}\else
  \providecommand{\doi}{doi: \begingroup \urlstyle{rm}\Url}\fi

\bibitem[Alves et~al.(2017)Alves, Mamud, Martins, and
  Roberty]{alves2017inverse}
C.~J. Alves, R.~Mamud, N.~F. Martins, and N.~C. Roberty.
\newblock On inverse problems for characteristic sources in {H}elmholtz
  equations.
\newblock \emph{Mathematical Problems in Engineering}, 2017\penalty0
  (1):\penalty0 2472060, 2017.

\bibitem[Baratchart et~al.(2004)Baratchart, Abda, Hassen, and
  Leblond]{baratchart2004recovery}
L.~Baratchart, A.~B. Abda, F.~B. Hassen, and J.~Leblond.
\newblock Recovery of pointwise sources or small inclusions in 2d domains and
  rational approximation.
\newblock \emph{Inverse problems}, 21\penalty0 (1):\penalty0 51, 2004.

\bibitem[Broadie and Glasserman(1996)]{broadie1996estimating}
M.~Broadie and P.~Glasserman.
\newblock Estimating security price derivatives using simulation.
\newblock \emph{Management science}, 42\penalty0 (2):\penalty0 269--285, 1996.

\bibitem[Brunner(2002)]{brunner2002forms}
T.~A. Brunner.
\newblock Forms of approximate radiation transport.
\newblock Technical report, Sandia National Lab.(SNL-NM), 2002.

\bibitem[Charkaoui et~al.(2021)Charkaoui, El~Badia, and
  Eddine~Alaa]{charkaoui2021efficient}
A.~Charkaoui, A.~El~Badia, and N.~Eddine~Alaa.
\newblock An efficient and robust algorithm for source reconstruction in the
  {H}elmholtz equation.
\newblock \emph{Inverse Problems in Science and Engineering}, 29\penalty0
  (13):\penalty0 2944--2970, 2021.

\bibitem[Chen and Glasserman(2007)]{chen2007malliavin}
N.~Chen and P.~Glasserman.
\newblock {M}alliavin greeks without {M}alliavin calculus.
\newblock \emph{Stochastic processes and their applications}, 117\penalty0
  (11):\penalty0 1689--1723, 2007.

\bibitem[Code and Whitney(1995)]{code1995polarization}
A.~D. Code and B.~A. Whitney.
\newblock Polarization from scattering in blobs.
\newblock \emph{Astrophysical Journal, Part 1 (ISSN 0004-637X), vol. 441, no.
  1, p. 400-407}, 441:\penalty0 400--407, 1995.

\bibitem[Cook et~al.(2024)Cook, Rayens, Do, Payne, and
  McKinley]{cook2024considering}
K.~J. Cook, N.~Rayens, L.~Do, C.~K. Payne, and S.~A. McKinley.
\newblock Considering experimental frame rates and robust segmentation analysis
  of piecewise-linear microparticle trajectories, 2024.
\newblock URL \url{https://arxiv.org/abs/2412.21025}.

\bibitem[El~Badia and Ha-Duong(2000)]{elbadia2000inverse}
A.~El~Badia and T.~Ha-Duong.
\newblock An inverse source problem in potential analysis.
\newblock \emph{Inverse Problems}, 16\penalty0 (3):\penalty0 651, 2000.

\bibitem[El~Badia and Nara(2011)]{el2011inverse}
A.~El~Badia and T.~Nara.
\newblock An inverse source problem for {H}elmholtz's equation from the cauchy
  data with a single wave number.
\newblock \emph{Inverse Problems}, 27\penalty0 (10):\penalty0 105001, 2011.

\bibitem[Fourni{\'e} et~al.(1999)Fourni{\'e}, Lasry, Lebuchoux, Lions, and
  Touzi]{fournie1999applications}
E.~Fourni{\'e}, J.-M. Lasry, J.~Lebuchoux, P.-L. Lions, and N.~Touzi.
\newblock Applications of {M}alliavin calculus to {M}onte {C}arlo methods in
  finance.
\newblock \emph{Finance and Stochastics}, 3:\penalty0 391--412, 1999.

\bibitem[Glynn(1987)]{glynn1987likelilood}
P.~W. Glynn.
\newblock Likelilood ratio gradient estimation: an overview.
\newblock In \emph{Proceedings of the 19th conference on Winter simulation},
  pages 366--375, 1987.

\bibitem[Glynn(1989)]{glynn1989likelihood}
P.~W. Glynn.
\newblock Likelihood ratio derviative estimators for stochastic systems.
\newblock In \emph{Proceedings of the 21st conference on Winter simulation},
  pages 374--380, 1989.

\bibitem[Glynn and Olvera-Cravioto(2019)]{glynn2019likelihood}
P.~W. Glynn and M.~Olvera-Cravioto.
\newblock Likelihood ratio gradient estimation for steady-state parameters.
\newblock \emph{Stochastic Systems}, 9\penalty0 (2):\penalty0 83--100, 2019.

\bibitem[Horton and Kyprianou(2023)]{horton2023neutron}
E.~Horton and A.~E. Kyprianou.
\newblock \emph{Stochastic Neutron Transport and Non-Local Branching Markov
  Processes}.
\newblock Number~34 in Probability and Its Applications. Birkh\"{a}user Cham,
  2023.

\bibitem[Isakov(1990)]{isakov1990inverse}
V.~Isakov.
\newblock \emph{Inverse source problems}.
\newblock Number~34 in Mathematical Series and Monographs. American
  Mathematical Society, 1990.

\bibitem[Isakov(2006)]{isakov2006inverse}
V.~Isakov.
\newblock \emph{Inverse problems for partial differential equations}, volume
  127.
\newblock Springer, 2006.

\bibitem[Juvela(2005)]{juvela2005efficient}
M.~Juvela.
\newblock Efficient {M}onte {C}arlo methods for continuum radiative transfer.
\newblock \emph{Astronomy \& Astrophysics}, 440\penalty0 (2):\penalty0
  531--546, 2005.

\bibitem[Kuiper and Klessen(2013)]{kuiper2013reliability}
R.~Kuiper and R.~S. Klessen.
\newblock The reliability of approximate radiation transport methods for
  irradiated disk studies.
\newblock \emph{Astronomy \& Astrophysics}, 555:\penalty0 A7, 2013.

\bibitem[Liu(2017)]{liu2017inverse}
J.-C. Liu.
\newblock {An inverse source problem of the {P}oisson equation with {C}auchy
  data}.
\newblock \emph{Electronic Journal of Differential Equations}, 2017.

\bibitem[Liu and Li(2020)]{liu2020reconstruction}
J.-C. Liu and X.-C. Li.
\newblock Reconstruction algorithms of an inverse source problem for the
  {H}elmholtz equation.
\newblock \emph{Numerical Algorithms}, 84:\penalty0 909--933, 2020.

\bibitem[Miles(2025)]{miles2024incorporating-a71}
C.~E. Miles.
\newblock Incorporating spatial diffusion into models of bursty stochastic
  transcription.
\newblock \emph{J. R. Soc. Interface}, 22, 2025.
\newblock \doi{10.1098/rsif.2024.0739}.

\bibitem[Pascucci et~al.(2004)Pascucci, Wolf, Steinacker, Dullemond, Henning,
  Niccolini, Woitke, and Lopez]{pascucci20042d}
I.~Pascucci, S.~Wolf, J.~Steinacker, C.~Dullemond, T.~Henning, G.~Niccolini,
  P.~Woitke, and B.~Lopez.
\newblock The 2d continuum radiative transfer problem-benchmark results for
  disk configurations.
\newblock \emph{Astronomy \& Astrophysics}, 417\penalty0 (3):\penalty0
  793--805, 2004.

\bibitem[Pinte et~al.(2009)Pinte, Harries, Min, Watson, Dullemond, Woitke,
  M{\'e}nard, and Dur{\'a}n-Rojas]{pinte2009benchmark}
C.~Pinte, T.~Harries, M.~Min, A.~Watson, C.~Dullemond, P.~Woitke,
  F.~M{\'e}nard, and M.~Dur{\'a}n-Rojas.
\newblock Benchmark problems for continuum radiative transfer-high optical
  depths, anisotropic scattering, and polarisation.
\newblock \emph{Astronomy \& Astrophysics}, 498\penalty0 (3):\penalty0
  967--980, 2009.

\bibitem[Plecháč et~al.(2021)Plecháč, Stoltz, and
  Wang]{plechac_convergence_2021}
P.~Plecháč, G.~Stoltz, and T.~Wang.
\newblock Convergence of the likelihood ratio method for linear response of
  non-equilibrium stationary states.
\newblock \emph{ESAIM: Mathematical Modelling and Numerical Analysis},
  55:\penalty0 S593--S623, 2021.
\newblock ISSN 0764-583X, 1290-3841.
\newblock \doi{10.1051/m2an/2020050}.
\newblock URL \url{https://www.esaim-m2an.org/10.1051/m2an/2020050}.

\bibitem[Plecháč et~al.(2023)Plecháč, Stoltz, and
  Wang]{plechac_martingale_2023}
P.~Plecháč, G.~Stoltz, and T.~Wang.
\newblock Martingale product estimators for sensitivity analysis in
  computational statistical physics.
\newblock \emph{IMA Journal of Numerical Analysis}, 43\penalty0 (6):\penalty0
  3430--3477, Nov. 2023.
\newblock ISSN 0272-4979, 1464-3642.
\newblock \doi{10.1093/imanum/drac073}.
\newblock URL \url{https://academic.oup.com/imajna/article/43/6/3430/6936612}.

\bibitem[Plyasunov and Arkin(2007)]{plyasunov2007efficient}
S.~Plyasunov and A.~P. Arkin.
\newblock Efficient stochastic sensitivity analysis of discrete event systems.
\newblock \emph{Journal of Computational Physics}, 221\penalty0 (2):\penalty0
  724--738, 2007.
\newblock ISSN 0021-9991.
\newblock \doi{https://doi.org/10.1016/j.jcp.2006.06.047}.
\newblock URL
  \url{https://www.sciencedirect.com/science/article/pii/S0021999106003135}.

\bibitem[Schuss(2009)]{schuss2009theory}
Z.~Schuss.
\newblock \emph{Theory and Applications of Stochastic Processes: An Analytical
  Approach}.
\newblock Applied Mathematical Sciences. Springer New York, 2009.
\newblock ISBN 9781441916051.
\newblock URL \url{https://books.google.com/books?id=cU5GAAAAQBAJ}.

\bibitem[Shigeta and Hon(2003)]{shigeta2003numerical}
T.~Shigeta and Y.-C. Hon.
\newblock Numerical source identification for {P}oisson equation.
\newblock In \emph{Inverse Problems in Engineering Mechanics IV}, pages
  137--145. Elsevier, 2003.

\bibitem[Smith et~al.(2025)Smith, Lehoucq, and Franke]{smith2025stochastic}
J.~D. Smith, R.~Lehoucq, and B.~Franke.
\newblock A stochastic calculus approach to {B}oltzmann transport.
\newblock \emph{Nuclear Science and Engineering}, 199\penalty0 (sup1):\penalty0
  S220--S234, 2025.

\bibitem[Wang and Plecháč(2019)]{wang_steady-state_2019}
T.~Wang and P.~Plecháč.
\newblock Steady-{State} {Sensitivity} {Analysis} of {Continuous} {Time}
  {Markov} {Chains}.
\newblock \emph{SIAM Journal on Numerical Analysis}, 57\penalty0 (1):\penalty0
  192--217, Jan. 2019.
\newblock ISSN 0036-1429, 1095-7170.
\newblock \doi{10.1137/18M119402X}.
\newblock URL \url{https://epubs.siam.org/doi/10.1137/18M119402X}.

\bibitem[Warren and Allen(2012)]{warren2012malliavin}
P.~B. Warren and R.~J. Allen.
\newblock {M}alliavin weight sampling for computing sensitivity coefficients in
  brownian dynamics simulations.
\newblock \emph{Phys. Rev. Lett.}, 109:\penalty0 250601, Dec 2012.
\newblock \doi{10.1103/PhysRevLett.109.250601}.
\newblock URL \url{https://link.aps.org/doi/10.1103/PhysRevLett.109.250601}.

\bibitem[Warren and Allen(2013)]{warren2013malliavin}
P.~B. Warren and R.~J. Allen.
\newblock {M}alliavin weight sampling: a practical guide.
\newblock \emph{Entropy}, 16\penalty0 (1):\penalty0 221--232, 2013.

\bibitem[Yin and Zhu(2009)]{yin2009hybrid}
G.~G. Yin and C.~Zhu.
\newblock \emph{{Hybrid Switching Diffusions: Properties and Applications}}.
\newblock Stochastic Modelling and Applied Probability. Springer New York, NY,
  2009.
\newblock \doi{https://doi.org/10.1007/978-1-4419-1105-6}.

\end{thebibliography}

\end{document}